


\documentclass[bj]{imsart}

\RequirePackage[OT1]{fontenc}
\RequirePackage{amsthm,amsmath
}


\usepackage{amsmath,amstext,amssymb,amsopn,amsthm}
\usepackage{url,verbatim}
\usepackage{mathtools}
\usepackage{enumerate}

\usepackage{color,graphicx}

\usepackage[margin=30mm]{geometry}
\usepackage{eucal,mathrsfs,dsfont}

\numberwithin{equation}{section}


\allowdisplaybreaks

\usepackage[utf8]{inputenc}
\usepackage{graphicx,amsmath,amssymb,enumerate,color,amsthm}
\usepackage{epstopdf}
\renewcommand{\P}{\mathbb{P}}

\newcommand{\IGFT}{{\rm IED}}
\newcommand{\IED}{{\rm IED}}

\newcommand{\essinf}{\mathrm{ess\,inf}}

\newcommand{\R}{\mathbb{R}}

\newcommand{\Z}{\mathbb{Z}}
\newcommand{\E}{\mathbb{E}}
\newcommand{\1}{\mathbf{1}}

\newcommand{\la}{\leftarrow}

\newtheorem{thm}{Theorem}[section]
\newtheorem{lemma}[thm]{Lemma}
\newtheorem{propo}[thm]{Proposition}

\theoremstyle{definition}

\newtheorem{defin}[thm]{Definition}

\newtheorem{remark}[thm]{Remark}
\newtheorem{example}[thm]{Example}

\newcommand{\eps}{\varepsilon}

%
%


\begin{document}

\begin{frontmatter}
\title{Inverse Exponential Decay: Stochastic Fixed Point Equation and ARMA Models}
\runtitle{Inverse Exponential Decay}

\begin{aug}
\author{\fnms{Krzysztof} \snm{Burdzy}\thanksref{a,e1}\ead[label=e1,mark]{burdzy@uw.edu}}
\author{\fnms{Bartosz} \snm{Ko\L{}odziejek}\thanksref{b,e2}\ead[label=e2,mark]{b.kolodziejek@mini.pw.edu.pl}}
\and
\author{\fnms{Tvrtko} \snm{Tadi\'c}\thanksref{c,e3}%
\ead[label=e3,mark]{tvrtko@math.hr}}

\address[a]{Department of Mathematics, Box 354350, University of Washington, Seattle, WA 98195, USA
\printead{e1}}

\address[b]{Faculty of Mathematics and Information Science, Warsaw University of Technology, Koszykowa 75, 00-662 Warsaw, Poland
\printead{e2}}

\address[c]{Microsoft Corporation, One Microsoft Way, Redmond, WA 98052, USA \printead{e3}}

\runauthor{K. Burdzy et al.}

\affiliation{University of Washington, Microsoft and Warsaw University of Technology}

\end{aug}

\begin{abstract}
We study solutions to the stochastic fixed point equation $X\stackrel{d}{=}AX+B$ when the coefficients are nonnegative and $B$
is an ``inverse exponential decay'' (\IGFT) random variable. 
We provide theorems on the left tail of $X$ which complement  well-known tail results of Kesten and Goldie. We generalize our results to ARMA processes with nonnegative coefficients whose noise terms are from the \IGFT{} class.
We describe the lower envelope for these ARMA processes.
\end{abstract}

\begin{keyword}
\kwd{Tail estimates} \kwd{inverse-gamma distribution} \kwd{stochastic fixed point equation} \kwd{iterated random sequences} \kwd{ARMA models} \kwd{time series}
\end{keyword}

\end{frontmatter}

\section{Introduction}

The paper is devoted to studying properties, especially left tails, of positive random variables that arise in several closely related contexts---stochastic fixed point equations, ARMA models,  and iterated random functions.

For a two-dimensional random vector $(A,B)$, an independent random variable $X$ is said to satisfy the stochastic fixed point equation 
if
\begin{equation}\label{stochasticFixedPointEquation}
X\stackrel{d}{=}AX+B. 
\end{equation}
The behavior of the solution, especially the left and right tails, has been extensively studied. A classical result (\cite{Kesten,GoldieAAP}) 
says that under some assumptions on $(A,B)$, for some $\alpha, C_-, C_+ >0$,
\begin{equation}\label{tailEstimates}
\P(X>x)\sim C_+x^{-\alpha}\quad \textrm{and}\quad \P(X<-x)\sim C_-x^{-\alpha}, 
\end{equation}
as $x\to\infty$. (See the precise statement in Theorem \ref{thm:KestenGoldieResult}. Here  $\sim$ means that the ratio of the two quantities converges to 1.)
An excellent review of  the subject can be found in a recent book \cite{X=AX+B}. 

It can be shown that 
if $A$ and $B$ are nonnegative random variables then the solution $X$ to \eqref{stochasticFixedPointEquation} is also a nonnegative random variable.
Under this extra assumption on $(A,B)$, the first estimate in $(\ref{tailEstimates})$ is still meaningful and informative. But the second one is not because for $x>0$ we have 
$\P(X<-x)=0$. 
It is natural to ask for a meaningful estimate for the left tail under these circumstances.  We will examine the behavior of $\P(X<x)$ as $x\to 0^+$. 
This question does not seem to be addressed anywhere in the literature; in particular,  it does not seem to be examined in \cite{X=AX+B}.

The motivation for the present paper comes from a project on a ``Fleming-Viot'' type process defined in \cite{BHM}. We will explain in Section \ref{sec:example} how the problem arises in the setting of \cite{extinctionOfFlemingViot}.

\subsection{Review of the main results}
This paper revolves around \IGFT$^\rho_L(\lambda)$ random variables defined as follows.

\begin{defin}\label{j15.3}
 We will say that a nonnegative random variable $X$ has an inverse exponential decay of the left tail with index $\rho>0$ if
\begin{equation}
\lim_{x\to 0^+}x^{\rho}L(x)\log \P(X<x)=-\lambda,\label{eq:definitionOfLambda0}
\end{equation}
for a slowly varying function $L$ at 0 and $\lambda\geq 0$.
We will call such a random variable \IGFT$^\rho_L(\lambda)$.
In the case $\lim_{x\to0^+}L(x)=1$, we will write \IGFT$_1^\rho(\lambda)$.
\end{defin}

The best known \IGFT$^\rho_L(\lambda)$ distributions  are called ``inverse-gamma;'' in this case, $\rho=1$
and $L (x) \equiv 1$ (see Definition \ref{n16.1}). 

In Section \ref{sec:seriesConvergence} (see especially  Theorems \ref{prop:firstMomentCondition} and  \ref{prop:expectationOfLogarithm}),
we will find  conditions  for a sequence $(X_i)$ of independent $\IGFT^\rho_L(\lambda_i)$-random variables  so that the series
$\sum_{i=1}^{\infty}\alpha_iX_i$
is an \IGFT$^\rho_L(\Lambda)$-random variable with  
$$\Lambda=\left(\sum_{i=1}^{\infty}\alpha_i^{\rho/(1+\rho)}\lambda_i^{1/(1+\rho)}\right)^{1+\rho}.$$

Consider an ARMA series of the form
$$X_n=\sum_{i=1}^p\phi_i X_{n-i}+B_n+\sum_{j=1}^q\theta_jB_{n-j}$$
with positive coefficients $\phi_i$ and $\theta_j$ and initial value 0. Assume that $(B_i)$ are i.i.d. $\IGFT^\rho_1(\lambda)$-random variables.
We will give  conditions (see Theorem \ref{thm:lowerEnvelope2}) so that $X_n$ converge to an $\IGFT^\rho_1(\Lambda)$-random variable and 
$$\liminf_{n\to \infty} (\log n)^{1/\rho} X_n =\Lambda^{1/\rho}>0,\ \text{ a.s.},$$
where $\Lambda$ is an explicit function of $\lambda, \rho$ and the coefficients of the recursion.

We will also study the stochastic fixed point equation 
$X\stackrel{d}{=}AX+B$ 
where the vector $(A,B)$ is independent of $X$, $B$ is an \IGFT$^\rho_L(\Lambda)$-random variable, and $A$ and $B$ are nonnegative and positively quadrant dependent (see Theorem \ref{thm:seriesConvegenceABIndependent}).
If $A$ and $B$ are not positively quadrant dependent, we will prove by example that Theorem \ref{thm:seriesConvegenceABIndependent}) (ii) need not be satisfied (see Section \ref{sec:example}).

\subsection{Organization of the paper}
In Section \ref{sec:IGFT}, we introduce \IGFT{}  random variables
and we prove that this class is closed under addition of finitely many independent summands. 
Section \ref{sec:seriesConvergence} is devoted to infinite series  of independent \IGFT{} random variables---we show that the sum may or may not be \IGFT.
In Section \ref{sec:AREquation}, we discuss the autoregressive equation, i.e.,  the fixed point equation with the multiplicative coefficient that is a constant. 
In Section \ref{sec:ARMA} we expand our results to ARMA models with positive coefficients and the noise from the \IGFT{} class.
In Section \ref{sec:RandomCoefficientIndependent}, we give estimates for  left tails of solutions to the fixed point equation when the coefficients are positively quadrant dependent random variables. In Section \ref{sec:example} we show that if the coefficients are not positively quadrant dependent then these results no longer hold and the analysis  is more demanding.

\section{Preliminaries}

We will  write $a^+ = \max(0,a)$ for any real $a$.

We will use the convention that for any sequence $(d_n)$ and $i>j$, $\sum_{n=i}^{j} d_n = 0$ and $\prod_{n=i}^{j} d_n = 1$.

Recall that the essential infimum of a random variable $A$ is defined as follows,
\begin{equation}\label{n8.1}
 \essinf(A)=  \sup\{x\in \R : \P(A< x)=0\} .
\end{equation}

If $\lim_{x\to 0^+} f(x)/g(x) =1$ then we will write $f(x)\sim g(x)$. The same notation will be used if the limit holds when $x\to \infty$.

\begin{defin} (See \cite[(1.2.1),  Sects. 1.4.1-1.4.2]{regularVariation}.)
A function $f:(0,\infty)\to (0,\infty)$ is called  slowly varying at 0 if for all $a>0$ we have 
$\lim_{x\to0^+} f(ax)/f(x) =1$. A function $f:(0,\infty)\to (0,\infty)$ is called  regularly varying  of index $\rho$ at 0 if for all $a>0$ we have 
$\lim_{x\to0^+} f(ax)/f(x) =a^\rho$.
A function $f$ is called regularly varying of index $\rho$ at infinity if $x\mapsto 1/f(1/x)$ is a regularly varying function of index $\rho$ at $0$.
\end{defin}

\begin{lemma} (See \cite[(1.2.1),  Thm 1.4.1]{regularVariation}.)
 A positive function $f$ is regularly varying  of index $\rho$ at 0 if and only if $f(x) = x^\rho L(x)$ for some  slowly varying function $L$ at 0.
\end{lemma}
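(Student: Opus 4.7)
The plan is to prove both implications by direct computation from the definitions, since this characterization is essentially tautological once one writes the candidate factor explicitly. There is no real obstacle here; the statement is a bookkeeping exercise in the definitions, and I would expect the proof to be only a few lines.

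First I would handle the easy direction. Assume $f(x) = x^{\rho} L(x)$ with $L$ slowly varying at $0$. Then for any $a > 0$,
\begin{equation*}
\frac{f(ax)}{f(x)} \;=\; \frac{(ax)^{\rho} L(ax)}{x^{\rho} L(x)} \;=\; a^{\rho}\,\frac{L(ax)}{L(x)}.
\end{equation*}
Since $L(ax)/L(x) \to 1$ as $x \to 0^+$ by the definition of slow variation, the right-hand side tends to $a^{\rho}$, showing that $f$ is regularly varying of index $\rho$ at $0$.

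Second I would handle the converse direction. Assume $f$ is positive and regularly varying of index $\rho$ at $0$, and simply define
\begin{equation*}
L(x) \;:=\; \frac{f(x)}{x^{\rho}}, \qquad x > 0.
\end{equation*}
This $L$ is positive, and $f(x) = x^{\rho} L(x)$ by construction. To check slow variation, compute for any $a > 0$:
\begin{equation*}
\frac{L(ax)}{L(x)} \;=\; \frac{f(ax)/(ax)^{\rho}}{f(x)/x^{\rho}} \;=\; a^{-\rho}\,\frac{f(ax)}{f(x)} \;\longrightarrow\; a^{-\rho}\cdot a^{\rho} \;=\; 1,
\end{equation*}
as $x \to 0^+$. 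Hence $L$ is slowly varying at $0$, completing the proof.

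Since both implications are immediate, I would not anticipate any subtlety; the only thing worth flagging is that the representation $L(x) = f(x)/x^{\rho}$ is canonical and unique given $\rho$, so the factorization is unambiguous. The result could also be deduced from the analogous statement at infinity (\cite{regularVariation}, Thm.~1.4.1) via the substitution $x \mapsto 1/x$, but the direct argument above is more transparent.
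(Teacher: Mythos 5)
Your proof is correct. The paper itself does not give a proof of this lemma---it is stated with a citation to Bingham, Goldie, and Teugels---but your two-line verification from the definitions is precisely the canonical argument one finds in that reference, so there is no meaningful divergence to report.
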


\begin{defin} \label{n3.4}
(See \cite[(1.5.10)]{regularVariation}.)
If $f(x)$ is defined and locally bounded on some interval $(0,a]$ and $\lim_{x\to 0^+} f(x) = \infty$ then the generalized inverse of $f$ is defined by
\begin{align*}
f^\la(y) = \sup\{x>0: f(x) > y\}.
\end{align*}

If $f(x)$ is strictly positive on some interval $(0,a]$ and $\lim_{x\to 0^+} f(x) = 0$ then the generalized inverse of $f$ is defined by
\begin{align*}
f^\la(y) = \inf\{x>0: f(x) > y\}.
\end{align*}

\end{defin}

\begin{lemma}\label{assyptoicInverseLemma}
If $\alpha>0$ and $f$ is $\alpha$-regularly varying at  $0$ then there exists 
a function $g$ which is
 $1/\alpha$-regularly varying  $0$ and such that 
\begin{equation}
f(g(x))\sim g(f(x))\sim x  \label{eq:aymptoticInverse}
\end{equation}
as $x\to  0^+$. The function $g$, called an asymptotic inverse of $f$, is determined up to asymptotic equivalence and one version of $g$ is $f^{\leftarrow}$.  
\end{lemma}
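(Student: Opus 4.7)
The plan is to reduce the statement to the classical asymptotic-inverse theorem at infinity (Theorem 1.5.12 of \cite{regularVariation}) via the substitution that the paper already uses to relate regular variation at $0$ and at $\infty$. First I would set $F(y) := 1/f(1/y)$ for $y$ sufficiently large. By the definition of regular variation at infinity recalled in the excerpt, $f$ being $\alpha$-regularly varying at $0$ is exactly equivalent to $F$ being $\alpha$-regularly varying at $\infty$. Since $\alpha>0$, one has $F(y)\to\infty$ as $y\to\infty$ (equivalently, $f(x)\to 0$ as $x\to 0^+$, via Potter-type bounds applied to the representation $f(x)=x^\alpha L(x)$).

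Next, I would invoke the classical asymptotic-inverse theorem for regularly varying functions at infinity: there exists $G$, which is $1/\alpha$-regularly varying at $\infty$ and unique up to asymptotic equivalence, such that $F(G(y))\sim G(F(y))\sim y$ as $y\to\infty$, and one valid choice is $G=F^\leftarrow$. Setting $g(x):=1/G(1/x)$, the same equivalence gives that $g$ is $1/\alpha$-regularly varying at $0$. The asymptotic inverse relations are then verified mechanically: writing $y=1/x\to\infty$ as $x\to 0^+$,
\begin{equation*}
f(g(x))=\frac{1}{F(1/g(x))}=\frac{1}{F(G(1/x))}=\frac{1}{F(G(y))}\sim \frac{1}{y}=x,
\end{equation*}
and an analogous unwinding yields $g(f(x))\sim x$.

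To see that the generalized inverse $f^\leftarrow$ of Definition \ref{n3.4} is one admissible version of $g$, I would observe that since $f(x)\to 0$ as $x\to 0^+$, the second clause of that definition applies, and a direct calculation shows $f^\leftarrow(y)=1/F^\leftarrow(1/y)$ (the $\inf$-at-$0$ convention translates into the $\sup$-at-$\infty$ convention under the inversion $x\mapsto 1/x$). Hence taking $G=F^\leftarrow$ produces $g=f^\leftarrow$. Uniqueness up to asymptotic equivalence likewise transfers: if $\tilde g$ is another asymptotic inverse, then $\tilde G(y):=1/\tilde g(1/y)$ is an asymptotic inverse of $F$ at infinity, so $\tilde G\sim G$ by the infinity version of the uniqueness statement, and inverting back yields $\tilde g\sim g$.

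The only nontrivial step is the verification that the equivalence between regular variation at $0$ and at $\infty$ preserves asymptotic inverses in the precise form stated; but this amounts to bookkeeping with the substitution $y=1/x$ together with the uniform convergence theorem for regularly varying functions, which passes from the $\infty$-version to the $0$-version via the same substitution. No genuinely new estimate is needed beyond what \cite{regularVariation} already provides.
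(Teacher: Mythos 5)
Your proposal is correct and follows exactly the route the paper intends: the paper's own proof simply declares the argument routine and cites \cite[Sect.\ 1.5.7, Thm.\ 1.5.12]{regularVariation}, and your reduction via $F(y)=1/f(1/y)$ to the asymptotic-inverse theorem at infinity is precisely the bookkeeping the authors have in mind. The one small caveat is that your claimed exact identity $f^{\leftarrow}(y)=1/F^{\leftarrow}(1/y)$ can fail pointwise under the paper's $\inf$/$\sup$ conventions when $f$ is not monotone, but only asymptotic equivalence is needed and that transfers cleanly, so this does not affect the argument.
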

\begin{proof}
The proof is routine so it is left to the reader. See \cite[Sect. 1.5.7]{regularVariation}, in particular Theorem 1.5.12.
\end{proof}

\section{Inverse exponential decay}\label{sec:IGFT}\rm

The definition of random variables with inverse exponential decay of the left tail is inspired, in part, by 
inverse gamma distributions. These are  used in Bayesian statistics (see  \cite{HoffBayes}). One way to define inverse gamma distributions is by saying that the reciprocal of a random variable with a gamma distribution has the inverse gamma distribution. A more direct definition follows.

\begin{defin}\label{n16.1}
For a positive random variable $X$ we say it has the inverse gamma distribution with parameters $\alpha,\beta >0$ 
if its density function has the form 
$$f(x)=\frac{\beta^\alpha}{\Gamma(\alpha)}x^{-\alpha-1}e^{-\beta/x},$$
for $x\in (0,\infty)$.
\end{defin}

Recall Definition \ref{j15.3}.
The notation $L, L_1$, etc., will be used exclusively for slowly varying functions at 0 unless stated otherwise.

\begin{lemma}\label{lemma:rho1LessRho2}
Suppose that $\rho_1<\rho_2$. If $X$ is an $\IED^{\rho_1}_{L_1}(\lambda)$-random variable then $X$ is $\IED^{\rho_2}_{L_2}(0)$-random variable 
for every slowly varying function $L_2$ at 0. 
\end{lemma}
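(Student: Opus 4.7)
The plan is to reduce the assertion to a statement about the ratio of two slowly varying functions, and then invoke the standard Potter/Karamata bound which says that a slowly varying function at $0$ multiplied by any strictly positive power of $x$ tends to $0$ as $x\to 0^+$.

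First I would write the key algebraic identity: for $x\in (0,a)$ small enough that $\P(X<x)>0$,
\begin{equation*}
x^{\rho_2}L_2(x)\log\P(X<x) \;=\; x^{\rho_2-\rho_1}\,\frac{L_2(x)}{L_1(x)}\,\Bigl(x^{\rho_1}L_1(x)\log\P(X<x)\Bigr).
\end{equation*}
By hypothesis, the last factor on the right tends to $-\lambda$, hence stays bounded as $x\to 0^+$. So the task reduces to showing that $x^{\rho_2-\rho_1} L_2(x)/L_1(x)\to 0$ as $x\to 0^+$.

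Next I would observe that the ratio $M(x):=L_2(x)/L_1(x)$ of two slowly varying functions at $0$ is itself slowly varying at $0$ (directly from the definition, since $M(ax)/M(x)=(L_2(ax)/L_2(x))\cdot (L_1(x)/L_1(ax))\to 1$ for every $a>0$). Setting $\delta:=\rho_2-\rho_1>0$, the Potter-type bound for slowly varying functions at $0$ (which follows from the corresponding statement at infinity applied to $y\mapsto M(1/y)$, as recorded in the reference \cite{regularVariation} already cited in the preliminaries) gives $x^{\delta}M(x)\to 0$ as $x\to 0^+$.

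Combining the two displays, the product of a factor tending to $0$ with a bounded factor tends to $0$, so
\begin{equation*}
\lim_{x\to 0^+} x^{\rho_2}L_2(x)\log\P(X<x)=0,
\end{equation*}
which is exactly the statement that $X$ is $\IED^{\rho_2}_{L_2}(0)$. There is no real obstacle here; the only minor point worth being explicit about is quoting (or briefly justifying) that $x^{\delta}\cdot (\text{slowly varying at }0)\to 0$ as $x\to 0^+$ for $\delta>0$, and noting that the choice of $L_2$ is irrelevant precisely because this decay is faster than any compensating growth or decay a slowly varying function can produce.
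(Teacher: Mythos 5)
Your argument is correct and is evidently the routine argument the authors intended when they left the proof to the reader: factor out $x^{\rho_1}L_1(x)\log\P(X<x)\to-\lambda$, observe that $L_2/L_1$ is again slowly varying at $0$, and use the standard Karamata fact that a positive power of $x$ dominates any slowly varying factor as $x\to 0^+$. No gaps; the only implicit point, which you correctly handle by noting the bracketed factor stays bounded, is that $\P(X<x)>0$ for small $x$ (forced by finiteness of $\lambda$ in the hypothesis).
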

\begin{proof}
The proof is routine and left to the reader.
\end{proof}

\begin{lemma}\label{propo:limitsEquality}
For any non-negative random variable $X$, 
\begin{equation}
\lim_{x\to 0^+}x^\rho L(x)\log \P(X<x)=\lim_{x\to 0^+}x^\rho L(x)\log \P(X\leq x) ,\label{eq:definitionOfLambdaInTwoWays}
\end{equation}
in the sense that if one of the  limits exists then the other one exists as well and they are equal.
\end{lemma}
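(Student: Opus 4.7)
The plan is to sandwich $\P(X\le x)$ between $\P(X<x)$ and $\P(X<x(1+\eps))$ and exploit regular variation of $h(x):=x^\rho L(x)$ at $0$. Set $h(x)=x^\rho L(x)$, which is positive near $0$ and regularly varying of index $\rho$, so $h(x)/h(x(1+\eps))\to (1+\eps)^{-\rho}$ as $x\to 0^+$ for each fixed $\eps>0$. Note $\log\P(X<x)\le \log\P(X\le x)\le 0$ and likewise for the other side, so everything multiplied by the positive quantity $h(x)$ takes values in $[-\infty,0]$.

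Assume first that $\lim_{x\to 0^+} h(x)\log\P(X<x)=-\lambda$ for some $\lambda\in[0,\infty]$. By monotonicity of the distribution function,
\[
\P(X<x)\le \P(X\le x)\le \P(X<x(1+\eps))\quad\text{for every }\eps>0,
\]
hence, multiplying by $h(x)>0$,
\[
h(x)\log\P(X<x)\;\le\; h(x)\log\P(X\le x)\;\le\; \frac{h(x)}{h(x(1+\eps))}\cdot h(x(1+\eps))\log\P(X<x(1+\eps)).
\]
Taking $x\to 0^+$ in the outer terms, the left side tends to $-\lambda$ and the right side tends to $(1+\eps)^{-\rho}(-\lambda)$ (this is valid for $\lambda\in[0,\infty]$, interpreting $(1+\eps)^{-\rho}\cdot(-\infty)=-\infty$). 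Therefore
\[
-\lambda\;\le\;\liminf_{x\to 0^+} h(x)\log\P(X\le x)\;\le\;\limsup_{x\to 0^+} h(x)\log\P(X\le x)\;\le\;(1+\eps)^{-\rho}(-\lambda).
\]
Letting $\eps\to 0^+$ squeezes the two sides together and yields $\lim_{x\to 0^+} h(x)\log\P(X\le x)=-\lambda$.

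For the converse, assume $\lim_{x\to 0^+} h(x)\log\P(X\le x)=-\lambda$. The analogous inequalities
\[
\P(X\le x(1-\eps))\;\le\;\P(X<x)\;\le\;\P(X\le x)
\]
for $\eps\in(0,1)$, together with $h(x)/h(x(1-\eps))\to(1-\eps)^{-\rho}$, give
\[
(1-\eps)^{-\rho}(-\lambda)\;\le\;\liminf_{x\to 0^+} h(x)\log\P(X<x)\;\le\;\limsup_{x\to 0^+} h(x)\log\P(X<x)\;\le\;-\lambda,
\]
and letting $\eps\to 0^+$ finishes the proof.

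The only place where anything could go wrong is the interchange of regular variation with the $\log\P$ factor, which is why I isolate the ratio $h(x)/h(x(1\pm\eps))$ and apply the regular variation limit to it separately; the $\log\P$ factor is then evaluated at the shifted argument, where the assumed limit applies directly. The case $\lambda=\infty$ (i.e., $\P(X<x)$ decays faster than any power-type rate captured by $h$) is not a genuine obstacle because both bounds degenerate to $-\infty$ simultaneously under the same scaling.
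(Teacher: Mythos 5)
Your proof is correct and follows essentially the same approach as the paper: sandwiching $\P(X\le x)$ between $\P(X<x)$ and $\P(X<x(1+\eps))$ (and the analogous sandwich for the converse), multiplying by the regularly varying prefactor, isolating the ratio $h(x)/h(x(1\pm\eps))$, and letting $\eps\to 0$. The only cosmetic difference is that you package the prefactor as a single regularly varying function $h$ and also note the degenerate case $\lambda=\infty$ explicitly, while the paper first records the one-sided $\limsup\le\liminf$ comparison before squeezing, but the argument is the same.
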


\begin{proof}
For all $x>0$ and $\eps\in(0,1)$, 
\begin{align}\label{n3.1}
\P(X\leq x(1-\varepsilon)) \leq
\P(X<x) \leq \P(X\leq x)
\leq \P(X<x(1+\varepsilon)).
\end{align}
Therefore
\begin{equation} \label{limsupParameterLessLiminfParameter}
 \limsup_{x\to 0^+}x^{\rho}L(x)\log \P(X<x) \leq \liminf_{x\to 0^+} x^{\rho}L(x)\log \P(X\leq x).
\end{equation}

Assume that the limit $\lim_{x\to 0^+}x^{\rho}L(x)\log \P(X<x)=-\lambda$ exists. In view of the last inequality in \eqref{n3.1},  for any  $\varepsilon >0$,
\begin{align*}
&\limsup_{x\to 0^+} x^\rho L(x)\log \P(X\leq x) \\
&\leq 
\limsup_{x\to 0^+}\frac{L(x)}{L(x(1+\varepsilon))}\cdot \frac 1 {(1+\eps)^\rho}
\cdot\left( [x(1+\varepsilon)]^\rho L(x(1+\varepsilon))\log \P(X<x(1+\varepsilon)) \right)\\
&= 
\limsup_{x\to 0^+}\frac{L(x)}{L(x(1+\varepsilon))}\cdot \frac 1 {(1+\eps)^\rho}
\cdot(-\lambda)
=
-\frac{\lambda}{(1+\varepsilon)^\rho}.
\end{align*}
 Letting $\varepsilon\to 0^+$ and combining the resulting inequality with 
$(\ref{limsupParameterLessLiminfParameter})$ yields $(\ref{eq:definitionOfLambdaInTwoWays})$.

In the case when $\lim_{x\to 0^+}x^\rho L(x)\log \P(X\leq x)$ exists,
 a similar argument, based on  $(\ref{limsupParameterLessLiminfParameter})$ and the 
 first inequality in \eqref{n3.1}
 proves $(\ref{eq:definitionOfLambdaInTwoWays})$. 
\end{proof}

\example{We will show that the positive limit in $(\ref{eq:definitionOfLambda0})$ might not exist for any fixed $\rho$ and $L$. It is easy to see that there exists a c.d.f. $F$ with the property that
\begin{align}\label{n3.2}
F\left( 2^{-(3k+1)}\right)=e^{-2^{3k+1}}\quad \textrm{and}\quad F\left( 2^{-(3k+2)}\right)=e^{-2^{3k+3}}
\end{align}
for $k=1,2,\dots$, because $F$ restricted to the arguments listed in \eqref{n3.2} is increasing.  
If $X$ is a  random variable with c.d.f. $F$ then
\begin{align}\label{j15.1}
\lim_{k\to\infty}2^{-(3k+1)}\log\P\left(X\leq 2^{-(3k+1)}\right)&=-1, \\  \lim_{k\to\infty}2^{-(3k+2)}\log\P\left(X\leq 2^{-(3k+2)}\right)&=-2.\label{j15.2}
\end{align}
Assume that there exist $\rho$, a function $L$ slowly varying at $0$, and $\lambda>0$ such that $\log P(X<x)\sim -\lambda (x^\rho L(x))^{-1}$ as $x\to 0^+$.
Then \eqref{j15.1} shows that $\rho=1$.
Lemma \ref{propo:limitsEquality} and \eqref{j15.1}-\eqref{j15.2} imply that
\begin{align*}
\frac12 &=\lim_{k\to \infty}\frac{2^{-(3k+1)}\log\P\left(X\leq 2^{-(3k+1)}\right)}{2^{-(3k+2)}\log\P\left(X\leq 2^{-(3k+2)}\right)}
=\frac12 \lim_{k\to \infty}\frac{\log\P\left(X\leq 2\cdot 2^{-(3k+2)}\right)}{\log\P\left(X\leq 2^{-(3k+2)}\right)}\\
&=\frac12 \lim_{k\to \infty}\frac{2^{-(3k+2)} L( 2^{-(3k+2)})}
{2\cdot 2^{-(3k+2)} L(2\cdot 2^{-(3k+2)})}
=\frac14.
\end{align*}
This contradiction proves our claim.
}

\lemma{\label{propo:inverseGamaDistribTail}If $X$ has inverse gamma distribution with parameters $\alpha, \beta>0$ (see Definition \eqref{n16.1}), then $X$ is an \IGFT$^1_1(\beta)$-random variable.}

\begin{proof}
Consider any $\eps\in(0,1)$.
If $X$ is a random variable with the inverse gamma distribution with parameters $\alpha, \beta>0$ then for any $\eps\in(0,1)$ and sufficiently small $x>0$,
\begin{align*}
 \P(X<x)&\geq  \int_{(1-\varepsilon)x}^x\frac{\beta^\alpha}{\Gamma(\alpha)}e^{-\beta/t}t^{-\alpha-1}dt \\
 &\geq \frac{\beta^\alpha}{\Gamma(\alpha)}\exp\left(-\frac{\beta}{(1-\varepsilon)x}\right)((1-\varepsilon)x)^{-\alpha-1}\cdot\varepsilon x.
\end{align*}
This implies that $\liminf_{x\to 0^+}x\log \P(X<x)\geq -\beta/(1-\varepsilon)$, for all $\varepsilon \in (0,1)$. Hence, $\liminf_{x\to 0^+}x\log \P(X<x)\geq -\beta$.

On the other hand, 
\begin{align*}
\P(X<x)&
= \int_0^x \frac{\beta^\alpha}{\Gamma(\alpha)}e^{-\beta/t}t^{-\alpha-1}dt 
= \int_0^x \frac{\beta^\alpha}{\Gamma(\alpha)}
e^{-\beta (1-\varepsilon)/t}e^{-\beta\eps/t}t^{-\alpha-1}dt \\
&
\leq e^{-\beta (1-\varepsilon)/x} \int_0^x 
\frac{\beta^\alpha}{\Gamma(\alpha)}
e^{-\beta\eps/t}t^{-\alpha-1}dt
\leq e^{-\beta (1-\varepsilon)/x}\int_{0}^\infty\frac{\beta^\alpha}{\Gamma(\alpha)}e^{-\beta \varepsilon/t}t^{-\alpha-1}dt.
\end{align*}
Since the last integral is finite and independent of $x$, we have 
\begin{align*}
\limsup_{x\to 0^+}x\log \P(X<x)\leq -\beta (1-\varepsilon)
\end{align*}
 for all $\varepsilon \in (0,1)$. Hence, $\limsup_{x\to 0^+}x\log \P(X<x)\leq -\beta$. 
\end{proof}

\rm
The following two propositions are elementary so their proofs are left to the reader.

\begin{propo} \label{n3.3}
 Suppose that  $X$ is an $\IED^\rho_L(\lambda)$ random variable, $\gamma>0$ and $L_1(x) \equiv L(x^{1/\gamma})$. Then $X^\gamma$ is an $\IED^{\rho/\gamma}_{L_1}(\lambda)$ random variable.
 
 In particular, if $X$ is an $\IED^\rho_1(\lambda)$ random variable and $\gamma>0$ then $X^\gamma$ is an $\IED^{\rho/\gamma}_1(\lambda)$ random variable.

\end{propo}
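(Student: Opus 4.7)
The plan is to reduce the claim to a direct change of variables in the defining limit. Since $X\geq 0$ and $\gamma>0$, the map $t\mapsto t^\gamma$ is an increasing bijection of $[0,\infty)$ to itself, so for any $y>0$ we have the identity
\begin{equation*}
\P(X^\gamma < y) = \P(X < y^{1/\gamma}).
\end{equation*}
This is the one observation that makes the whole proposition go through.

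Next I would substitute $x = y^{1/\gamma}$ (equivalently $y = x^\gamma$) in the expression we want to compute. Note that as $y\to 0^+$ we also have $x\to 0^+$ (and vice versa), and
\begin{equation*}
y^{\rho/\gamma} L_1(y) = (x^\gamma)^{\rho/\gamma} L\bigl((x^\gamma)^{1/\gamma}\bigr) = x^{\rho} L(x),
\end{equation*}
using the definition $L_1(y) = L(y^{1/\gamma})$. Therefore
\begin{equation*}
y^{\rho/\gamma} L_1(y) \log \P(X^\gamma < y) = x^\rho L(x) \log \P(X < x),
\end{equation*}
and taking $y\to 0^+$ (so $x\to 0^+$) gives the desired limit $-\lambda$ from the hypothesis that $X$ is $\IED^\rho_L(\lambda)$.

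The special case claimed in the proposition is just the observation that if $L\equiv 1$ then $L_1 \equiv 1$ as well. There is no real obstacle here; the only thing to check is that the substitution $L_1(y)=L(y^{1/\gamma})$ really does preserve slow variation at $0$, which is immediate from the definition because for any $a>0$, $L_1(ay)/L_1(y) = L(a^{1/\gamma} y^{1/\gamma})/L(y^{1/\gamma}) \to 1$ as $y\to 0^+$. This is why the proposition is stated as elementary and left to the reader in the paper.
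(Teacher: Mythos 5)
Your proof is correct. The paper explicitly leaves this proposition to the reader as elementary, and your change-of-variables argument (using $\P(X^\gamma<y)=\P(X<y^{1/\gamma})$, the substitution $x=y^{1/\gamma}$, and the observation that $y^{\rho/\gamma}L_1(y)=x^\rho L(x)$) together with the verification that $L_1$ inherits slow variation at $0$ from $L$ is precisely the routine argument the authors had in mind.
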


\begin{propo} \label{propo:closureOnMultiplication} 
If  $X$ is an $\IED^\rho_L(\lambda)$ random variable and $\alpha>0$ then $\alpha X$ is an $\IED^{\rho}_{L}(\alpha^\rho\lambda)$ random variable.
\end{propo}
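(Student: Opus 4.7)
The plan is to unwind the definition directly using the change of variable $y=x/\alpha$ and exploit the slow variation of $L$ at $0$.

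First, I would write $\P(\alpha X<x)=\P(X<x/\alpha)$ for every $x>0$, which converts the problem about $\alpha X$ into a statement about $X$ evaluated at a rescaled point. Substituting this into the limit one wants to compute gives
\begin{equation*}
x^\rho L(x)\log\P(\alpha X<x)=x^\rho L(x)\log\P(X<x/\alpha).
\end{equation*}
Setting $y=x/\alpha$, so that $x=\alpha y$ and $y\to 0^+$ iff $x\to 0^+$, this becomes
\begin{equation*}
\alpha^\rho\cdot\frac{L(\alpha y)}{L(y)}\cdot y^\rho L(y)\log\P(X<y).
\end{equation*}

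Next, I would take $x\to 0^+$ (equivalently $y\to 0^+$) and handle each factor separately. The factor $y^\rho L(y)\log\P(X<y)$ tends to $-\lambda$ by hypothesis that $X$ is $\IED^\rho_L(\lambda)$. The ratio $L(\alpha y)/L(y)$ tends to $1$ because $L$ is slowly varying at $0$ (applied with $a=\alpha$). Multiplying the limits yields $-\alpha^\rho\lambda$, which is exactly the identity in Definition \ref{j15.3} required to conclude that $\alpha X$ is $\IED^\rho_L(\alpha^\rho\lambda)$.

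There is no serious obstacle here; the only minor care needed is to note that the slowly varying function associated to $\alpha X$ is the same $L$, not $L(\alpha\,\cdot)$, and that distinction is absorbed precisely by the asymptotic $L(\alpha y)/L(y)\to 1$. The whole argument is a two line computation once the change of variables is made, which is presumably why the authors declared the statement ``elementary'' and left it to the reader.
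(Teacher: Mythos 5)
Your proof is correct and is the natural argument — the paper explicitly leaves this proposition to the reader as elementary, and your change-of-variables computation combined with slow variation of $L$ is exactly what is intended.
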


\rm 
\example{\label{remark:inverseVariablesTail}Note that if $X$ is a nonnegative random variable with the property 
$$\lim_{x\to\infty}\frac{\log \P(X>x)}{x^\rho}=-\lambda\leq 0,$$
then $X^{-1}$ is an $\IGFT^\rho_1(\lambda)$-random variable. We give two natural examples.
\begin{enumerate}[(a)]
 \item If $X$ has the exponential distribution with parameter $\lambda$ (i.e., its mean is $1/\lambda$), then $X^{-1}$ is an \IGFT$^1_1(\lambda)$-random variable.
 \item If $X$ has the normal distribution with mean $\mu$ and variance $\sigma^2$ then $X^{-2}$ is an \IGFT$^1_1((2\sigma^2)^{-1})$-random variable. It follows from Proposition \ref{n3.3} that $|X|^{-1}= (X^{-2})^{\frac{1}{2}}$ is an \IGFT$^2_1((2\sigma^2)^{-1})$-random variable.
\end{enumerate}}

\medskip
The following is an alternative characterization of  \IGFT-random variables that we will use often.

\begin{lemma}\label{propo:characterization}
A positive random variable $X$ 
is \IGFT$^\rho_L(\lambda)$ 
if and only if for every $\delta>0$ there exist $x_0>0$, 
$c_{\delta}>0$ and $C_{\delta}>0$ such that for all $x\in (0,x_0)$,
\begin{equation}
c_{\delta}\exp\left(-\frac{\lambda+\delta}{x^{\rho}L(x)}\right)\leq \P(X<x)\leq \P(X\leq x) \leq C_{\delta}\exp\left(-\frac{\lambda(1-\delta)}{x^{\rho}L(x)}\right). \label{eq:inverseGammaLikeCharacterization}
\end{equation} 
\end{lemma}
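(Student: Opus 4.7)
My plan is to prove both directions by direct translation between the limit statement of Definition \ref{j15.3} and the exponential bounds, using logarithm/exponentiation and the vanishing of $x^\rho L(x)$ at $0$.

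For the forward direction, I would assume the limit
$$\lim_{x\to 0^+} x^\rho L(x)\log \P(X<x) = -\lambda$$
and invoke Lemma \ref{propo:limitsEquality} to extend this limit to $\P(X\leq x)$. Given $\delta>0$, I would apply the definition of the limit with two different tolerances---$\delta$ for the lower bound and $\lambda\delta$ for the upper bound (when $\lambda>0$)---to produce $x_0>0$ such that for $x\in(0,x_0)$,
$$-\lambda-\delta \leq x^\rho L(x)\log \P(X<x) \quad\text{and}\quad x^\rho L(x)\log \P(X\leq x)\leq -\lambda+\lambda\delta.$$
Since $x^\rho L(x)>0$ for small $x$, exponentiating yields \eqref{eq:inverseGammaLikeCharacterization} with $c_\delta=C_\delta=1$. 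The case $\lambda=0$ is handled by taking $C_\delta=1$ (the upper bound reduces to the trivial $\P(X\leq x)\leq 1$) and using tolerance $\delta$ for the lower bound.

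For the reverse direction, I would take logarithms of the assumed bounds and multiply through by $x^\rho L(x)>0$, obtaining
$$x^\rho L(x)\log c_\delta - (\lambda+\delta) \leq x^\rho L(x)\log \P(X<x) \leq x^\rho L(x)\log C_\delta - \lambda(1-\delta).$$
The key observation is that for $\rho>0$ and $L$ slowly varying at $0$ one has $x^\rho L(x)\to 0$ as $x\to 0^+$ (since $L(x)=o(x^{-\eps})$ for any $\eps>0$, take $\eps<\rho$), so the $\log c_\delta$ and $\log C_\delta$ contributions disappear. This gives
$$-(\lambda+\delta)\leq \liminf_{x\to 0^+} x^\rho L(x)\log \P(X<x) \leq \limsup_{x\to 0^+} x^\rho L(x)\log \P(X<x) \leq -\lambda(1-\delta),$$
and letting $\delta\to 0^+$ produces the required limit $-\lambda$.

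The main---rather minor---obstacle is bookkeeping: matching the asymmetric form of the $\delta$-perturbation in the statement (additive in the lower bound, multiplicative in the upper bound) forces the forward direction to use two different tolerances and to split off $\lambda=0$ as a trivial case. Beyond that, the argument reduces to routine manipulation of the definition and the standard fact that regularly varying functions of positive index vanish at the origin.
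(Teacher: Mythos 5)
Your proof is correct and takes the same route the paper has in mind when it writes ``follows easily from the definition'': exponentiate/take logarithms, multiply or divide by the positive quantity $x^\rho L(x)$, and use that $x^\rho L(x)\to 0$ as $x\to 0^+$. You also correctly spot the two technical points worth making explicit, namely the need for Lemma \ref{propo:limitsEquality} in the forward direction (to control $\P(X\leq x)$ rather than only $\P(X<x)$) and the asymmetry between the additive $\delta$ in the lower bound and the multiplicative $(1-\delta)$ in the upper bound, which forces the two-tolerance bookkeeping and the separate (trivial) treatment of $\lambda=0$.
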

\begin{proof}
The claim follows easily from the definition \eqref{eq:definitionOfLambda0}.
\end{proof}

\rm
The following theorem, the springboard for the rest of the paper, says that \IGFT{} random variables form a closed class under natural operations.

\thm{\label{sum2IED}
Suppose that $X_1$ and $X_2$ are independent random variables and $X_k$ is \IGFT$^\rho_L(\lambda_k)$, for $k=1,2$.
Then $X_1+X_2$ is an \IGFT$^\rho_{L}\left([\lambda_1^{1/(1+\rho)}+\lambda_2^{1/(1+\rho)}]^{1+\rho}\right)$ random variable.

 }\rm
 
 \medskip
 
Before proving the theorem we recall without proof  de Bruijn's Tauberian Theorem
(see \cite[Thm. 4.12.9]{regularVariation}). See Definition \ref{n3.4} for the generalized inverse $f^{\leftarrow}$.

\thm{\label{thm:Bruijin}
Suppose that $\mu$ is a measure on $(0,\infty)$ with the finite Laplace transform
$$M(z):=\int_0^{\infty}e^{-z x}\mu(dx)<\infty,\quad \textrm{for all}\ z>0.$$
Suppose that $\alpha<0$ and $ \phi(x)$ is a regularly varying function with index  $\alpha$ at 0. Then, 
\begin{align}\label{n4.1}
\lim_{x\to 0^+} \phi^{\leftarrow}(1/x) \log \mu((0,x])=-\lambda
\end{align}
if and only if 
\begin{align}\label{n4.2}
-\log M(z) \sim 
(1-\alpha) (- \lambda/\alpha)^{\alpha/(\alpha -1)}
/\psi^{\leftarrow}(z),\ \textrm{as} \ z\to \infty,
\end{align}
where $\psi(z)=\phi(z)/z$.}

\begin{proof}[Proof of Theorem \ref{sum2IED}]
Let $ \P_{X_j}$ denote the distribution of $X_j$ and
$$M_j(z)=\int_0^{\infty}e^{-zx}\P_{X_j}(dx), \quad j=1,2.$$
It is clear that for $M_j(z)\leq 1<\infty$ for all $z>0$ and $j=1,2$. 

If $L_2$ is slowly varying at 0 then $\phi(x) := (1/x)^{1/\rho} L_2(1/x)$ is regularly varying with index $-1/\rho$ at  infinity. 
Arguments analogous to those in  \cite[Sect. 1.5.7]{regularVariation}
(see also Lemma \ref{assyptoicInverseLemma})
show that we can choose $L_2$ so that $\phi^\la (1/x) \sim x^\rho L(x)$ when $x\to 0^+$.
With this choice of $\phi$,
the assumption that  $X_j$ is \IGFT$^\rho_L(\lambda_j)$ matches \eqref{n4.1}, so \eqref{n4.2} holds, i.e., for $j=1,2$,
\begin{align*}
-\log M_j(z) \sim 
(1+1/\rho) ( \lambda_j\rho)^{1/(1+\rho )}
/\psi^{\leftarrow}(z),\ \textrm{as} \ z\to \infty,
\end{align*}
where $\psi(z)=\phi(z)/z$. Since $X_1$ and $X_2$ are independent,
\begin{align*}
 -\log M_{X_1+X_2}(z) &= -\log M_{X_1}(z)-\log M_{X_2}(z)\\
&\sim (1+1/\rho) \rho^{1/(1+\rho )}
 \left[\left(\lambda_1^{1/(1+\rho)}+\lambda_2^{1/(1+\rho)}\right)^{1+\rho}\right]^{1/(1+\rho)}
/\psi^{\leftarrow}(z),
\end{align*}
as $z\to \infty$. 
The proof is completed by
reversing our argument, using Theorem \ref{thm:Bruijin} and applying it to $X_1+X_2$.
\end{proof}

\propo{\label{thm:sumOfVariablesTail}
Suppose that $\rho_1<\rho_2<\ldots<\rho_n$ and for $i=1,\ldots, n$ and $j=1,\ldots, m_i$, $X_{ij}$ is  $\IED^{\rho_i}_{L_i}(\lambda_{ij})$, and all these random variables are independent. Then 
$$S:=\sum_{i=1}^n\sum_{j=1}^{m_i} \alpha_{ij}X_{ij}$$
is an $\IED^{\rho_n}_{L_n}(\Lambda)$-random variable, where
$$\Lambda=\left(\sum_{j=1}^{m_n}\alpha_{nj}^{\rho_n/(1+\rho_n)}\lambda_{nj}^{1/(1+\rho_n)}\right)^{1+\rho_n}.$$
}
\begin{proof}
Lemma \ref{lemma:rho1LessRho2} and Proposition \ref{propo:closureOnMultiplication} imply that
$$\lim_{x\to 0^+}x^{\rho_n} L_n(x)\log\P(\alpha_{ij}X_{ij}<x)=\left\{ 
\begin{array}{cl}
 0, & i\neq n;\\
 -\alpha_{ij}^{\rho_n}\lambda_{ij},& i=n.
\end{array}
\right.$$
The proposition follows from Theorem \ref{sum2IED} and induction.
\end{proof}

\section{Convergence of infinite \IGFT-series}\label{sec:seriesConvergence}\rm

It is a natural question whether Proposition \ref{thm:sumOfVariablesTail}
holds for infinite series with independent \IGFT{} summands. The short answer is ``no'' but ``yes'' under extra assumptions.

\propo{\label{corol:limsup}
Suppose that $(X_i)_{i\geq 1}$ are independent and $X_i$ is $\IGFT^\rho_L(\lambda_i)$, for $i\geq 1$.
 Let $S=\sum_{i\geq 1} X_i$ and $\Lambda =\left(\sum_{i\geq 1}\lambda_i^{1/(1+\rho)}\right)^{1+\rho}$. Then 
\begin{equation}
 \limsup_{x\to 0^+}x^\rho L(x) \log \P(S<x)\leq  -\Lambda. \label{eq:inequalityTail}
\end{equation}
}
\begin{proof}
Since $X_i$'s are nonnegative random variables, for every $n$ we have
$\P(S<x)\leq \P\left(\sum_{i=1}^nX_i<x\right)$.
This and Proposition \ref{thm:sumOfVariablesTail} imply
\begin{align*}
\limsup_{x\to 0^+}x^\rho L(x)\log \P(S<x) &\leq \limsup_{x\to 0^+}x^\rho L(x)\log \P\left(\sum_{i=1}^nX_i<x\right) \\
&=-\left(\sum_{i=1}^n\lambda_i^{1/(1+\rho)}\right)^{1+\rho}.
\end{align*}
If we let $n\to \infty$, the claim follows.
\end{proof}

\example{Recall the notation and assumptions from Proposition \ref{corol:limsup}.
Two examples given below  show that, in general, 
\eqref{eq:inequalityTail} cannot be strengthened to equality. The first example is a little bit more elegant than the second one. But $S\equiv \infty$ in the first example, suggesting that divergence of the sum  $\sum_{i\geq 1} X_i$ is the only possible obstacle to having equality in \eqref{eq:inequalityTail}. For this reason we  present another example with $S\leq 1$, a.s.

 (a) Suppose that $X_i$'s have inverse gamma distributions with parameters $\alpha_i=1/4$ and $\beta_i =1/i^4$, for $i\geq 1$ (see Definition \ref{n16.1}).
 According to Lemma \ref{propo:inverseGamaDistribTail}, $X_i$ is an \IGFT$^1_1(\beta_i)$-random variable, for every $i\geq 1$. Let $\rho=1$ and $\lambda_i=\beta_i=1/i^4$ for all $i$.
Then, for $i\geq 1$, 
\begin{align*}
 \P(X_i>1)&= \int_1^{\infty} \frac{1}{i\Gamma(1/4)}x^{-1/4-1}e^{-1/(i^4x)}dx
 \geq \frac{1}{i}\int_1^{\infty} \frac{1}{\Gamma(1/4)}x^{-1/4-1}e^{-1/x}dx.
\end{align*}
Hence, 
$$\sum_{i\geq 1} \P(X_i>1)=\infty.$$
Therefore, by the Borel-Cantelli lemma, $X_i>1$ for infinitely many $i$'s. It follows that $S=\infty$, a.s., regardless of the fact that 
$$
\Lambda :=\left(\sum_{i\geq 1}\lambda_i^{1/(1+1)}\right)^{1+1}
=
\left(\sum_{i=1}^{\infty}\frac{1}{i^2}\right)^2<\infty.$$

(b) 
For some $\tau_i>0$, $i\geq 1$, to be specified later, we give
 $X_i$'s the following cumulative distribution functions, 
\begin{equation*}
 F_i(x)= \left\{ \begin{array}{cl}
          0& \textrm{for} \ x\leq 0;\\
                 e^{1-\tau_i}e^{-1/(2^i x)}&\textrm{for} \ 0<x<2^{-i};\\
           1& \textrm{for  } 2^{-i}\leq x.
                                      \end{array}
 \right.
\end{equation*}
Note that each $X_i$ is an \IGFT$^1_1(2^{-i})$-random variable.
Let $\lambda_i = 2^{-i}$ and note that $S:=\sum_{i=1}^{\infty}X_i\leq 1$, a.s., because   $X_i\leq 2^{-i}$, a.s., for all $i$.
Although $\sqrt{\Lambda}=\sum_{i=1}^{\infty}\sqrt{\lambda_i}<\infty$, we will show that
$S$ 
is not an \IGFT$^1_1(\Lambda)$-random variable. Consider an integer $n\geq 1$ and let $x=2^{-(n+1/2)}$.
Hence $2^{-(n+1)}<x<2^{-n}$. Since each $X_i$ is bounded by $2^{-i}$, we have 
$$\{S<x\}\subset \bigcap_{i=1}^{n}\left\{X_i<2^{-i}\right\}.$$ Hence, using the fact that $\P(X_i<2^{-i})=e^{-\tau_i}$, 
 we have
\begin{align}\label{n21.5}
x\log \P(S<x)\leq x \sum_{i=1}^{n}\log \P(X_i<2^{-i})= -x\sum_{i=1}^{n}\tau_i.
\end{align}
 If we choose $\tau_i =c2^{i-1/2}$
then
\begin{align*}
x\sum_{i=1}^{n}\tau_i = x\sum_{i=1}^{n} c2^{i-1/2}
=   2^{-(n+1/2)} c 2^{-1/2} (2^{n+1} -1) = c (1-2^{-n-1}) . 
\end{align*}
This and \eqref{n21.5} imply that
$\liminf_{x\to 0^+}x\log \P(S<x)\leq  -c$. If we set $-c< -\Lambda$, then $\liminf_{x\to 0^+}x\log \P(S<x)<  -\Lambda$.
} 

\bigskip

We will  give sufficient conditions for the equality in 
$(\ref{eq:inequalityTail})$ in Theorems \ref{prop:firstMomentCondition} and  \ref{prop:expectationOfLogarithm}. The main technical part of the proof is contained in the following lemma.

\lemma{\label{thm:mainResult1}

Suppose that $(X_i)_{i\geq 1}$ is a sequence of independent random variables such that
$$\lim_{x\to 0^+}x^\rho L(x)
\log \P(X_i<x)=-\lambda_i,$$
for a sequence $(\lambda_i)_{i\geq 1}$ of non-negative real numbers and a slowly varying function $L$ at 0.
Assume that $$\Lambda^{1/(1+\rho)} :=\sum_{i\geq 1}\lambda_i^{1/(1+\rho)}<\infty.$$
Suppose that a random variable $B$ satisfies the following conditions. 
\begin{enumerate}[(i)]
\item $B$ has the property 
$$\lim_{x\to 0^+}x^\rho L(x)\log \P(B<x)=-1.$$
 \item $B$ is stochastically greater than  $X_i/\lambda_i^{1/\rho}$
for each $i\geq 1$,  i.e., for all $x\in \R$ and $i\geq 1$,
$$\P(B\leq x)\leq \P\left(\frac{X_i}{\lambda_i^{1/\rho}}\leq x\right).$$
\item There exist positive real numbers $(\gamma_i)_{i\geq 1}$ such that
$\sum_{i=1}^\infty\gamma_i=1$, 
$\sum_{i=1}^{\infty}\lambda_i/\gamma_i^\rho<\infty$, and
\begin{equation}
\lim_{x\to 0^+}x^\rho L(x)\sum_{i=1}^{\infty} \P\left(B\geq \frac{\gamma_i}{\lambda_i^{1/\rho}}x\right)=0.
 \label{eq:dominatonByDeterministicSequence2}
\end{equation} 
\end{enumerate}
Then $\sum_{i=1}^{\infty} X_i$ converges  to an a.s. finite random variable $S$ satisfying
\begin{equation*}
 \lim_{x\to 0^+}x^\rho L(x)\log \P(S<x)=  -\Lambda. 
\end{equation*}}

\begin{proof}
Since $B$ is stochastically greater than $X_i/\lambda_i^{1/\rho}$, we have 
\begin{align*}
\P(X_i\geq \gamma_ix)=
\P(X_i/\lambda_i^{1/\rho}\geq \gamma_ix/\lambda_i^{1/\rho})\leq \P(B\geq \gamma_ix/\lambda_i^{1/\rho}).
\end{align*}
It follows from \eqref{eq:dominatonByDeterministicSequence2}
 that for  small $x>0$ we have 
\begin{align}\label{j13.1}
\sum_{i=1}^{\infty} \P\left(B\geq  \gamma_ix/\lambda_i^{1/\rho}\right)<\infty.
\end{align}
Hence, for small $x>0$,  $\sum_{i=1}^{\infty} \P\left(X_i\geq \gamma_ix\right)<\infty$. By the Borel-Cantelli lemma,
 the sequence $(X_i)$ is eventually dominated by 
$\left(\gamma_ix\right)$ a.s. Recall that $\sum_{i=1}^\infty\gamma_i=1$ to see that $\sum_{n=1}^{\infty}X_n$
converges to an a.s. finite random variable $S$.

Let $S_n=\sum_{k=1}^nX_k$. We have
\begin{align*}
 \{S<x\}&\supset \left\{S_n<x\sum_{k=1}^n\gamma_k\right\}
 \cap \bigcap_{k=n+1}^{\infty} \left\{X_{k}<\gamma_{k}x\right\}
\end{align*}
so, by assumption (ii),
\begin{align}
 \P(S<x)&\geq \P\left(S_n<x\sum_{k=1}^n\gamma_k\right) \prod_{k=n+1}^{\infty} \P\left(X_{k}<\gamma_{k}x\right)\nonumber\\
&\geq \P\left(S_n<x\sum_{k=1}^n\gamma_k\right) \prod_{k=n+1}^{\infty} \P\left(B<\frac{\gamma_{k}}{\lambda_{k}^{1/\rho}}x\right). \label{eq:probabilityProduct1}
\end{align} 

By assumption (i), for $\varepsilon>0$
there exists $x_0>0$ such that 
\begin{align*}
\P(B<x)\geq \exp(-2(x^\rho L(x))^{-1}),
\end{align*}
for $0< x < x_0$.
Let 
\begin{align*}
F_n(x_0)&=\left\{k\geq n+1:x_0>\frac{\gamma_kx}{\lambda_k^{1/\rho}}\right\},
\qquad
G_n(x_0)=\left\{k\geq n+1:x_0\leq\frac{\gamma_kx}{\lambda_k^{1/\rho}}\right\}.
\end{align*}

Using this notation, we can break the last product in $(\ref{eq:probabilityProduct1})$ as follows:
\begin{align}
&\nonumber \prod_{k=n+1}^{\infty}\P\left(B<\frac{\gamma_{k}}{\lambda_{k}^{1/\rho}}x\right) = \prod_{k\in F_n(x_0)}\P\left(B<\frac{\gamma_{k}}{\lambda_{k}^{1/\rho}}x\right)\prod_{k\in G_n(x_0)}\P\left(B<\frac{\gamma_{k}}{\lambda_{k}^{1/\rho}}x\right)\\
\label{eq:probabilityProduct2}&\geq \exp\left[-\frac{2}{x^\rho}\sum_{k\in F_n(x_0)}\frac{\lambda_k}{\gamma_k^\rho}L\left(\frac{\gamma_kx}{\lambda_k^{1/\rho}}\right)^{-1}\right]\prod_{k\in G_n(x_0)}\left[1-\P\left(B\geq\frac{\gamma_{k}}{\lambda_{k}^{1/\rho}}x\right)\right].
\end{align}
By the definition of $G_n(x_0)$, for all $k\in G_n(x_0)$ we have 
\begin{align*}
\P\left(B\geq\frac{\gamma_{k}}{\lambda_{k}^{1/\rho}}x\right)\leq \P(B\geq x_0)<1.
\end{align*}
Standard calculus arguments show that there exists $m>1$ such that 
$-ma\leq \log(1-a)$ for $0<a\leq\P(B<x_0)$. Hence, by $(\ref{eq:probabilityProduct2})$,
\begin{align}
\nonumber \prod_{k=n+1}^{\infty}&\P\left(B<\frac{\gamma_{k}}{\lambda_{k}^{1/\rho}}x\right) \\
 &\geq \exp\left(-\frac{2}{x^\rho}\sum_{k\in F_n(x_0)}\frac{\lambda_k}{\gamma_k^\rho}L\left(\frac{\gamma_kx}{\lambda_k^{1/\rho}}\right)^{-1}-m\sum_{k\in G_n(x_0)}\P\left(B\geq\frac{\gamma_{k}}{\lambda_{k}^{1/\rho}}x\right)\right)\nonumber\\
&\geq \exp\left(-\frac{2}{x^\rho}\sum_{k=n+1}^\infty\frac{\lambda_k}{\gamma_k^\rho}L\left(\frac{\gamma_kx}{\lambda_k^{1/\rho}}\right)^{-1}-m\sum_{k=1}^\infty\P\left(B\geq\frac{\gamma_{k}}{\lambda_{k}^{1/\rho}}x\right)\right).\label{eq:probabilityProduct3}
\end{align}
Applying Proposition \ref{thm:sumOfVariablesTail} to  the first factor on the last line of \eqref{eq:probabilityProduct1}, 
\begin{align}\notag
\liminf_{x\to 0^+}  x^\rho L(x) \log\P\left(S_n<x\sum_{k=1}^n\gamma_k\right) 
& =\liminf_{x\to 0^+}x^\rho L(x) \log \P\left(\left(\sum_{k=1}^n\gamma_k\right)^{-1}S_n<x\right) \\
 &\geq -
\left(\sum_{i\geq 1}\lambda_i^{1/(1+\rho)}\right)^{1+\rho} \left(\sum_{k=1}^n\gamma_k\right)^{-\rho}\label{o25.1}
\end{align} 
The estimate  $(\ref{eq:probabilityProduct3})$ and the assumption
\eqref{eq:dominatonByDeterministicSequence2}
yield for the second factor in $(\ref{eq:probabilityProduct1})$,
\begin{align}\notag
\liminf_{x\to 0^+}  x^\rho& L(x) \log\left(
\prod_{k=n+1}^{\infty}\P\left(B<\frac{\gamma_{k}}{\lambda_{k}^{1/\rho}}x\right) 
\right)\\
& \geq\liminf_{x\to 0^+}x^\rho L(x) 
\left(
-\frac{2}{x^\rho}\sum_{k=n+1}^\infty\frac{\lambda_k}{\gamma_k^\rho}L\left(\frac{\gamma_kx}{\lambda_k^{1/\rho}}\right)^{-1}-m\sum_{k=1}^\infty\P\left(B\geq\frac{\gamma_{k}}{\lambda_{k}^{1/\rho}}x\right)
\right).\label{eq:boundRefreeFix}
\end{align} 
Set $f(z)=z^{\rho}L(z)$. By \cite[Thm 1.5.2]{regularVariation}, 
\begin{align}\label{o30.1}
\lim_{z\to0} f(z)/f(z/h)= h^\rho,
\end{align}
 uniformly in $h$, on each fixed interval of the form $[b,\infty)$. Since $\gamma_i/\lambda_i^{1/\rho}\to \infty$ as $i\to \infty$, there is $b>0$ such that all the values of this sequence are in $[b,\infty)$. We apply \eqref{o30.1} to see that for some $C_1< \infty$ and $x_1>0$, for all $x\in(0, x_1)$ and $i\geq 1$,
\begin{equation}
\frac{\lambda_i}{\gamma_i^\rho}L(x)L\left(\frac{\gamma_ix}{\lambda_i^{1/\rho}}\right)^{-1}
=
\frac{f(x)}{f\left(\gamma_ix/\lambda_i^{1/\rho}\right)}
\leq C_1 \frac{\lambda_i}{\gamma_i^\rho}. \label{eq:uniformBound}
\end{equation}
This, \eqref{j13.1} and \eqref{eq:boundRefreeFix} imply that
\begin{equation}
\liminf_{x\to 0^+}  x^\rho L(x) \log\left(
\prod_{k=n+1}^{\infty}\P\left(B<\frac{\gamma_{k}}{\lambda_{k}^{1/\rho}}x\right) 
\right)
\geq  -2C_1\sum_{i=n+1}^{\infty}\frac{\lambda_i}{\gamma_i^\rho}.\label{o25.2} 
\end{equation}

Combining \eqref{eq:probabilityProduct1}, \eqref{o25.1} and \eqref{o25.2} gives
\begin{align}\label{n21.10}
\liminf_{x\to 0^+}x^\rho L(x) \log \P(S<x) \geq-
\left(\sum_{i\geq 1}\lambda_i^{1/(1+\rho)}\right)^{1+\rho} \left(\sum_{k=1}^n\gamma_k\right)^{-\rho} -2C_1\sum_{i=n+1}^{\infty}\frac{\lambda_i}{\gamma_i^\rho}.
\end{align}
Recall that we have assumed that $\Lambda^{1/(1+\rho)} =\sum_{i\geq 1}\lambda_i^{1/(1+\rho)}<\infty$, $\sum_{i=1}^\infty\gamma_i=1$,
 and $\sum_{i=1}^{\infty}\lambda_i/\gamma_i^\rho<\infty$.
Thus, when we let $n\to \infty$ in \eqref{n21.10}, we obtain
\begin{align*}
\liminf_{x\to 0^+}x^\rho L(x)\log \P(S<x) \geq-\Lambda.
\end{align*}
The opposite inequality follows from Proposition \ref{corol:limsup}.
\end{proof}

\thm{\label{prop:firstMomentCondition}
Suppose that $(B_i)_{i\geq 1}$ is a sequence of i.i.d. \IGFT$_L^\rho(1)$-random variables 
satisfying
\begin{equation}
 \limsup_{x\to \infty}x^\rho \P(B_1\geq x)<\infty,
  \label{eq:almostPthMoment}
\end{equation}
and $(\alpha_i)_{i\geq 1}$ is a sequence of strictly positive real numbers 
satisfying
$\sum_{i=1}^{\infty} \alpha_i^{\rho/(1+\rho)}<\infty$.
Then the  series 
$\sum_{i=1}^{\infty}\alpha_iB_i$
converges a.s. to an $\IED_L^\rho(\Lambda)$ random variable, where
$$\Lambda= \left(\sum_{i=1}^{\infty}\alpha_i^{\rho/(1+\rho)}\right)^{1+\rho}.$$
}

\begin{remark}\label{n4.3}
Condition \eqref{eq:almostPthMoment} implies that 
$\E[B_1^\tau]<\infty$ for $\tau \in [0,\rho)$. If $\E[B_1^\rho]<\infty$ then  \eqref{eq:almostPthMoment} is satisfied. 
\end{remark}

\begin{proof}[Proof of Theorem \ref{prop:firstMomentCondition}]

We will apply Lemma \ref{thm:mainResult1}. Let $X_i = \alpha_iB_i$ and $\lambda_i = \alpha_i^\rho$, for $i\geq 1$. By Proposition \ref{propo:closureOnMultiplication}, $B_i$ is
$\IED^\rho_L(\alpha_i^\rho )$-random variable, for $i\geq 1$. Let $B$ be  distributed as $B_1$. It is easy to see that assumptions (i) and (ii) 
of Lemma \ref{thm:mainResult1} are satisfied by $X_i$'s and $B$.

Let $c=\left(\sum_{i=1}^{\infty}\alpha_i^{\rho/(1+\rho)}\right)^{-1}$ 
and $\gamma_i=c\alpha_i^{\rho/(1+\rho)}$ for $i\geq 1$. 
Then $\sum_{i=1}^{\infty}\gamma_i=1$ and 
\begin{align}\label{o30.3}
\sum_{i=1}^{\infty}\frac{\lambda_i}{\gamma_i^\rho}=\sum_{i=1}^\infty\frac{\alpha_i^{\rho}}{c^\rho \alpha_i^{\rho^2/(1+\rho)}}=\frac{1}{c^\rho}\sum_{i=1}^\infty \alpha^{\rho/(1+\rho)}= c^{-1-\rho}<\infty,
\end{align}
so two conditions listed in assumption (iii) of Lemma \ref{thm:mainResult1} are satisfied. 
It remains to verify 
\eqref{eq:dominatonByDeterministicSequence2}.

Without loss of generality we can assume  that
$\lim_{x\to \infty}L(x)=1$. Then  \eqref{eq:almostPthMoment} is equivalent to $\limsup_{x\to \infty}x^\rho L(x) \P(B\geq x)<\infty$.
We have  $\lim_{x\to 0^+}x^\rho L(x) \P(B\geq x)=0$
because $\rho >0$ and $L$ is slowly varying at 0.
The two conditions imply that there exists $C>0$ such that $\P(B\geq x) \leq C x^{-\rho} L(x)^{-1}$ for all $x>0$. In particular, we have for all $x>0$,
\begin{align}\label{n22.1}
\P\left(B\geq \frac{\gamma_i}{\lambda_i^{1/\rho}}x\right)\leq C x^{-\rho}\frac{\lambda_i}{\gamma_i^\rho}L\left(\frac{\gamma_ix}{\lambda_i^{1/\rho}}
\right)^{-1}.
\end{align}

For every fixed $i$, $\lim_{x\to 0^+}x^\rho L(x)\P\left(B\geq \frac{\gamma_i}{\lambda_i^{1/\rho}}x\right)=0$, 
so for every fixed $n$,
\begin{equation}
 \limsup_{x\to 0^+}x^\rho L(x)\sum_{i=1}^\infty\P\left(B\geq \frac{\gamma_i}{\lambda_i^{1/\rho}}x\right)=\limsup_{x\to 0^+}x^\rho L(x)\sum_{i=n}^\infty\P\left(B\geq \frac{\gamma_i}{\lambda_i^{1/\rho}}x\right). \label{eq:bound} 
\end{equation}
By \eqref{n22.1}, 
\begin{align}\label{o30.2}
x^\rho L(x)\sum_{i=n}^\infty\P\left(B\geq \frac{\gamma_i}{\lambda_i^{1/\rho}}x\right) \leq C\sum_{i=n}^\infty 
\frac{\lambda_i}{\gamma_i^\rho}L(x)L\left(\frac{\gamma_ix}{\lambda_i^{1/\rho}}\right)^{-1}.
\end{align}
 This, \eqref{eq:uniformBound}, \eqref{eq:bound},  \eqref{o30.2} and \eqref{o30.3} imply that
 $$\limsup_{x\to 0^+}x^\rho L(x)\sum_{i=1}^\infty\P\left(B\geq \frac{\gamma_i}{\lambda_i^{1/\rho}}x\right)
 \leq  C C_1 \sum_{i=n}^\infty \frac{\lambda_i}{\gamma_i^\rho} < \infty.  $$
Letting $n\to \infty$, we obtain  
$$\lim_{x\to 0^+}x^\rho L(x)\sum_{i=n}^\infty\P\left(B\geq \frac{\gamma_i}{\lambda_i^{1/\rho}}x\right)=0.$$
We see that \eqref{eq:dominatonByDeterministicSequence2} holds and, therefore, the theorem follows from Lemma $\ref{thm:mainResult1}$.
\end{proof}\rm

\begin{example}
Suppose that $(B_i)_{i\geq 1}$ are i.i.d. $\IED^\rho_1(1)$-random variables with finite $\rho$-th moment.
Theorem \ref{prop:firstMomentCondition} and Remark \ref{n4.3} imply that
for all $c,\varepsilon >0$, the series $\sum_{i=1}^\infty  c i^{-(1+\varepsilon)(1+\rho)/\rho}B_i$ 
converges a.s. The limit is an $\IGFT^\rho_1(\Lambda)$-random variable with  parameter $\Lambda =c^{\rho}\left(\sum_{i=1}^{\infty} i^{-1-\varepsilon}\right)^{1+\rho}$.
\end{example}

The following theorem shows that  if the parameters $\alpha_i$ decrease at a geometric rate then we can weaken the condition on the moments of $B_i$
and obtain the same conclusion as in Theorem \ref{prop:firstMomentCondition}.
 
\thm{\label{prop:expectationOfLogarithm}

Suppose that $(B_i)_{i\geq 1}$ is a sequence of i.i.d. \IGFT$^\rho_L(1)$-random variables 
satisfying $\E[\log^+B_i]<\infty$.
For any sequence of strictly positive real numbers $(\alpha_i)_{i\geq 1}$ with the property
$\limsup_{i\to\infty}\sqrt[i]{\alpha_i}=\kappa\in(0,1)$, the  series 
$\sum_{i=1}^{\infty}\alpha_iB_i$
converges a.s., and the limit is an \IGFT$^\rho_L(\Lambda)$-random variable with
$$ \Lambda=\left(\sum_{i=1}^{\infty}\alpha_i^{\rho/(1+\rho)}\right)^{1+\rho}.$$
}
\begin{proof}

We will apply Lemma \ref{thm:mainResult1}. Let $X_i = \alpha_iB_i$ and $\lambda_i =\alpha_i^\rho$ for $i\geq 1$, and let $B$ be a random variable with the same distribution as $B_1$. It is easy to see that assumptions (i) and (ii) of Lemma \ref{thm:mainResult1} are satisfied.
It only remains to show that condition (iii) is satisfied.

Pick $\zeta \in(\kappa, 1)$, and set $\gamma_i =\zeta^{i-1}(1-\zeta)$. 
Note that $\sum_{i=1}^{\infty}\gamma_i=1$. It is  clear that there
exist $c>0$ and $\kappa_1$ such that $0<\kappa<\kappa_1<\zeta$
and $0\leq \lambda_i^{1/\rho}=\alpha_i \leq c\kappa_1^i$ for all $i\geq 1$. 
Hence, 
$$\sum_{i=1}^{\infty}\frac{\lambda_i}{\gamma_i^\rho}\leq  c_1 \frac{\zeta^\rho}{(1-\zeta)^\rho} \sum_{i=1}^{\infty} \left(\kappa_1/\zeta\right)^{i \rho}<\infty,$$ 
so two conditions listed in assumption (iii) of Lemma \ref{thm:mainResult1} are satisfied. It remains to verify 
\eqref{eq:dominatonByDeterministicSequence2}.

We will use the following well known inequality, saying that for any positive random variable $X$ we have 
\begin{equation}
 \sum_{k=1}^{\infty}\P(X\geq k)\leq \E X +1. \label{eq:expectationLowerBound}
\end{equation}
The above inequality is used to justify the second inequality below,
\begin{align*}
 \sum_{i=1}^\infty\P\left(B\geq \frac{\gamma_i}{\lambda_i^{1/\rho}}x\right)
&\leq\sum_{i=1}^\infty\P\left(B\geq \frac{1-\zeta}{c\zeta }\left(\frac{\zeta}{\kappa_1}\right)^{i}x\right)\\
&=\sum_{i=1}^\infty\P\left(\log B -\log x+\log(c\zeta(1-\zeta)^{-1})\geq i\log \left(\zeta/\kappa_1\right)\right)\\
&\leq \frac{\E\left(\log B -\log x+\log(c\zeta(1-\zeta)^{-1})\right)^+}{\log \left(\zeta/\kappa_1\right)}+1\\
&\leq \frac{\E[\log^+ B] +|\log x|+|\log(c\zeta(1-\zeta)^{-1}|}{\log \left(\zeta/\kappa_1\right)}+1.
\end{align*}
The last estimate implies that
$\lim_{x\to0^+}x^\rho L(x)\sum_{i=1}^\infty\P\left(B\geq \gamma_ix/\lambda_i\right) =0$ because we have assumed that $\E[\log^+B_i]<\infty$.
We conclude that assumption (iii) of Lemma \ref{thm:mainResult1} holds. 
\end{proof}

\section{Autoregressive equation}\label{sec:AREquation}\rm

We will consider solutions to the autoregressive equation, a simple ARMA model, in this section. More general ARMA models will be considered in subsequent sections.

We start by recalling a known result. We would like to point out that random variables $A$ and $B$ need not be independent for the following to hold.

\thm{\label{thm:fixedPointRepresentation2}
If $\E[\log |A|]<0$ and $\E[\log^+|B|]<\infty$ then  $(\ref{stochasticFixedPointEquation})$ has a unique solution.
Suppose that $(A_i,B_i)_{i\geq 1}$ are i.i.d.  two-dimensional vectors distributed as $(A,B)$.
\begin{enumerate}[(a)]
 \item The distribution of the solution to $(\ref{stochasticFixedPointEquation})$ is the stationary distribution for the Markov chain given by
\begin{equation}
 X_n=A_nX_{n-1}+B_n. \label{eq:sequence2}
\end{equation}

 \item The series 
\begin{equation}
 S=\sum_{i=1}^\infty \left(\prod_{j=1}^{i-1}A_j\right)B_i \label{eq:solution:series2}
\end{equation}
converges a.s. and the distribution of the limit is the same as that of the solution to $(\ref{stochasticFixedPointEquation})$.
\end{enumerate}
}
\begin{proof}
By  \cite[Theorem 2.1.3]{X=AX+B} and  \cite[Theorem 2.1]{diaconisFreedman},  the sequence $(\ref{eq:sequence2})$ has a
unique ergodic invariant stationary distribution. Moreover, $(\ref{eq:solution:series2})$ is a representation of that distribution.
By \cite[Lemma 2.2.7]{X=AX+B}, this distribution is the unique solution to the fixed point equation  $(\ref{stochasticFixedPointEquation})$.
\end{proof}
\rm

In the rest of this section we will take a look at the nonnegative solution to the autoregressive equation 
\begin{equation}
X\stackrel{d}{=}rX+B, \label{eq:AR1}
\end{equation}
where $0<r<1$.

\corol{\label{thm:fixedPointRepresentation}
If $\E[\log^+B]<\infty$ then  $(\ref{eq:AR1})$ has a unique solution.
Suppose that $(B_i)$ are i.i.d.  random variables distributed as $B$.
\begin{enumerate}[(a)]
 \item The distribution of the solution to \eqref{eq:AR1} is the stationary distribution of the Markov chain given by
\begin{equation}
 X_n=rX_{n-1}+B_n. \label{eq:sequence}
\end{equation}

 \item The series 
\begin{equation}
 \sum_{i=1}^\infty r^{i-1}B_i \label{eq:solution:series}
\end{equation}
converges a.s. and the distribution of the limit is the same as that of the solution to $(\ref{eq:AR1})$.
\end{enumerate}
}
\begin{proof} The corollary follows from Theorem \ref{thm:fixedPointRepresentation2}.
\end{proof}

\corol{\label{tail:theorem}
If $B$ is an \IGFT$^\rho_L(\lambda)$-random variable such that $\E[\log^+B]<\infty$, and $X$ is the solution to  \eqref{eq:AR1} then $X$ is an \IGFT$^\rho_L(\Lambda)$-random variable with
\begin{equation}\label{n5.1}
 \Lambda =  \frac{\lambda}{(1-r^{\rho/(1+\rho)})^{1+\rho}}.
\end{equation}}
\rm

\begin{proof} The corollary follows from Theorem \ref{prop:expectationOfLogarithm} and
Corollary \ref{thm:fixedPointRepresentation} (b).
\end{proof}

The following result is a special case of Theorem \ref{thm:lowerEnvelope2} so we leave it without proof.

\propo{\label{thm:lowerEnvelope}If
$B$ is \IGFT$^\rho_1(\lambda)$,
  $\E[(\log^+B)^s]<\infty$ for all $s>0$, $\Lambda$ is defined in \eqref{n5.1}, and $X_n$'s are defined  in $(\ref{eq:sequence})$ then 
$$\liminf_{n\to \infty} (\log (n))^{1/\rho} X_n =\Lambda^{1/\rho}, \ \text{ a.s.}$$
}\rm

This proposition (for case $\rho=1$) is illustrated in Figure \ref{exampleLowerEnvelopeAR1_1}. Note that the result holds under mild assumptions on the right tail of $B$.

\begin{figure}
\begin{center}
 \includegraphics[height=6.5cm]{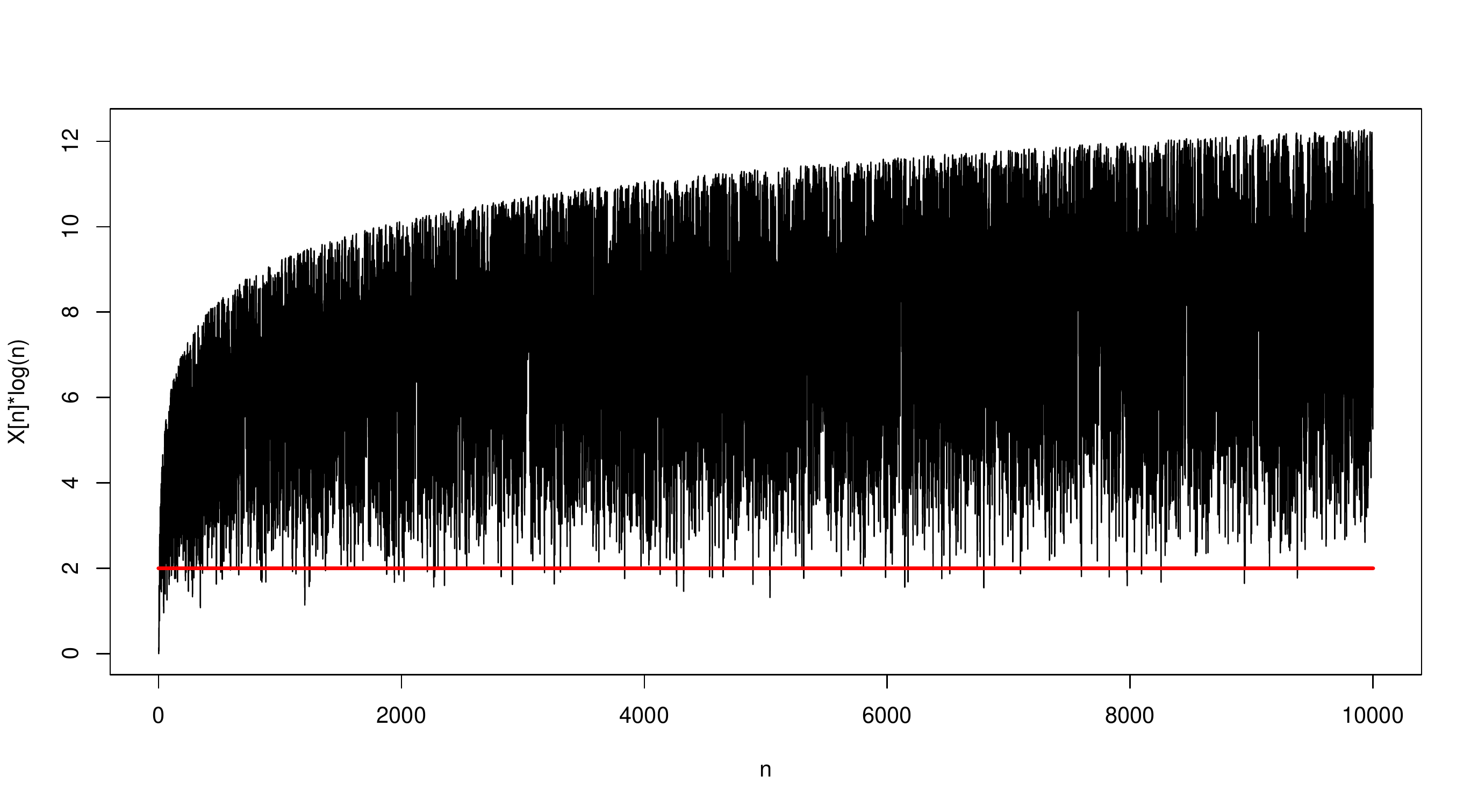}
\caption{The figure  shows the the  graph of $(\log(n)X_n)_{n\geq 1}$, where $(X_n)_{n\geq 1}$ is the ARMA process $X_{n+1}=\frac{1}{4}X_{n}+B_{n+1}$, where $B_n=\min(E_n^{-1},1)$, $E_n$'s are i.i.d
exponential with parameter $1/2$  and $X_0=0$. It follows from Proposition \ref{thm:lowerEnvelope} that $\Lambda =2$. This is visible in the graph as the black line segments occasionally reach to the horizontal line at level 2.}\label{exampleLowerEnvelopeAR1_1}
\end{center}
\end{figure}

\section{ARMA models with \IGFT{} noise}\label{sec:ARMA}

This section is devoted to autoregressive moving average (ARMA) models more general than those in the previous section.

\defin{\label{def:ARMA(p,q)like}An ARMA$(p,q)$ 
sequence has the form
\begin{equation}
X_n=\sum_{i=1}^p\phi_iX_{n-i}+B_n+\sum_{j=1}^q\theta_jB_{n-j},
\label{eq:arma(p,q)}
\end{equation}
where $(\phi_i)_{i=1,\dots,p}$ and $(\theta_j)_{j=1,\dots,q}$ are positive 
constants and $(B_i)_{i\geq1}$ are i.i.d.} 

\medskip

Our analysis of ARMA models will be based on \cite[Chap. 3]{BrockwellDavis}.
Using the notation from  Definition \ref{def:ARMA(p,q)like}, we define complex polynomials $\Phi$ and $\Theta$ by
$$\Phi(z) =1-\phi_1 z-\phi_2 z^2-\ldots -\phi_pz^p \quad \textrm{and}\quad
\Theta(z) = 1+\theta_1z+\ldots + \theta_q z^q.
$$

\thm{\label{thm:polynomialInterpretation}
Suppose that $(B_n)_{n\geq 1}$ are i.i.d.~\IGFT$^\rho_L(\lambda)$ random variables
and we have $\E[\log^+B_1]<\infty$.
Assume that $\Phi(z)\neq 0$ for $|z|\leq 1$ and $\Phi$
and $\Theta$ have no common roots. Then 
$\Psi(z) :=\Theta(z)/\Phi(z)$ is analytic on a neighborhood of the unit disc $\{|z|\leq 1\}$, and its Taylor series, i.e.,
\begin{equation}
\Psi(z)=\sum_{k=0}^{\infty}\psi_kz^k, \label{eq:TayloExpantion}
\end{equation}
has positive coefficients. 
\begin{enumerate}[(i)]
	\item Random variables $X_n$ in \eqref{eq:arma(p,q)} can be represented as
	\begin{equation}
			X_n=\sum_{k=0}^n \psi_k B_{n-k}.	\label{eq:recursionSolution}
	\end{equation}
	
	\item Each $X_n$ is an \IGFT$^\rho_L(\Lambda_n)$-random variable with  
 $$\Lambda_n =\lambda\left(\sum_{k=0}^n \psi_k^{\rho/(1+\rho)}\right)^{1+\rho}.$$

	\item When $n\to \infty$, 
$X_n\stackrel{d}{\to} X:=\sum_{k=0}^{\infty}\psi_k B_k$, and the limit is a finite \IGFT$^\rho_L(\Lambda)$-random  variable with  
$$\Lambda=\lambda\left(\sum_{k=0}^\infty \psi_k^{\rho/(1+\rho)}\right)^{1+\rho}.$$
\end{enumerate}}
\begin{proof}
There exists $\varepsilon>0$ such that $|\phi_1z+\phi_2z^2+\ldots+\phi_pz^p|<1$ for $|z|<\varepsilon$.
For such $z$,
$$\Psi(z)= \sum_{j=0}^{\infty}(\phi_1z+\phi_2z^2+\ldots+\phi_pz^p)^j(1+\theta_1z+\ldots + \theta_q z^q).$$
It is evident from this formula that for $|z|<\varepsilon$, $\Psi(z)$ can be represented as a series with positive coefficients. By the uniqueness of Taylor series,
all  $\psi_k$'s are positive. 
 
The function $\Psi$ is analytic on a disc around $0$ whose radius is greater than 1. 
Hence, the Taylor series of $\Psi$ around $0$ has a convergence radius $R> 1$. By the Cauchy-Hadamard formula,
$\limsup_{k\to\infty}|\psi_k|^{1/k}=R^{-1}<1$. Therefore, there exist $C>0$ and $0<\beta<1$ such that $\psi_k<C\beta^k$ for $k\geq 0$. This implies that both series $\sum_{k\geq1}\psi_k$ and $\sum_{k\geq1}\sqrt{\psi_k}$ converge. 

Part (i) follows from \cite[Thm. 3.1.1]{BrockwellDavis}.
Part (ii) follows from Proposition \ref{thm:sumOfVariablesTail}.
Part (iii) follows from Proposition \ref{thm:sumOfVariablesTail} and Theorem \ref{prop:expectationOfLogarithm}.
\end{proof}\rm

We will now  prove a  generalization of Proposition \ref{thm:lowerEnvelope}.

\thm{\label{thm:lowerEnvelope2}
Consider an ARMA sequence $(\ref{eq:arma(p,q)})$ satisfying the assumptions of Theorem \ref{thm:polynomialInterpretation} and recall the notation from \eqref{eq:TayloExpantion}. If $\E[(\log^+B)^r]<\infty$ for all $r>0$ then  
$$\liminf_{n\to \infty}  \frac{X_n}{g(1/\log n)} =\Lambda^{1/\rho} := \lambda^{1/\rho}\left(\sum_{k=0}^\infty \psi_k^{\rho/(1+\rho)}\right)^{(1+\rho)/\rho},\ \text{ a.s.},$$
where $g$ is the generalized inverse of the function $x\mapsto x^{\rho} L(x)$ at $0$.
}\rm

The proof will be preceded by a few lemmas.

\lemma{\label{lemma:easySideOfBorelCantelli}

(i) For every $\varepsilon>0$, the events
$$\left\{X_n\leq g\left(\frac{\Lambda}{(1+\varepsilon)\log n}\right)\right\}$$
happen finitely often, a.s.

(ii)  We have
$$\liminf_{n\to\infty}\frac{X_n}{g(1/\log n)}  \geq \Lambda^{1/\rho},\ \text{ a.s.}$$
}
\begin{proof}
(i) For any $\eps>0$
there exists $\delta\in(0,1)$ such that $\gamma := (1-\delta)(1+\varepsilon/2)>1$. 
By Theorem \ref{thm:polynomialInterpretation},
there exist $C_\delta$, $x_0$ and $n_0$ such that $$\P(X_{n_0}\leq x)\leq C_\delta \exp\left(-\frac{\Lambda(1-\delta)}{x^\rho L(x)}\right)$$ for all $x\in (0, x_0)$.

Random variables $B_k$ are i.i.d., so \eqref{eq:recursionSolution} implies that
	\begin{equation*}
			X_n=\sum_{k=0}^n \psi_k B_{n-k}
\stackrel{d}= \sum_{k=0}^n \psi_k B_{k}.
	\end{equation*}
Since $\psi_k$'s and $B_k$'s are nonnegative, it follows 
that $X_{n+1}$ stochastically majorizes $X_n$ for all $n$. Hence, $n\mapsto \P(X_{n}\leq x)$ is a non-increasing sequence and, therefore, for $n\geq n_0$ and $x\in (0,x_0)$,
$$\P(X_n\leq x)\leq C_\delta \exp\left(-\frac{\Lambda(1-\delta)}{x^\rho L(x)}\right).$$
It follows that for large $n$,
$$\P\left(X_n\leq g\left(\frac{\Lambda}{(1+\varepsilon)\log n}\right)\right)
\leq C_\delta e^{-(1+\varepsilon/2)(1-\delta)\log n}=C_{\delta}n^{-\gamma}.
$$
Hence, 
$$\sum_{n=1}^{\infty}\P\left(X_n\leq g\left(\frac{\Lambda}{(1+\varepsilon)\log n}\right)\right)<\infty,$$
and the claim follows by the Borel-Cantelli lemma.

(ii) It follows from part (i) that for every $\eps >0$,
$$\liminf_{n\to\infty}g\left(\frac{\Lambda}{(1+\varepsilon)\log n}\right)^{-1}X_n
\geq 1, \quad \text{a.s.}$$
By Lemma \ref{assyptoicInverseLemma}, $g$ is $1/\rho$-regularly varying at $0$. Hence, 
 $$g\left(\frac{\Lambda}{(1+\varepsilon)\log n}\right)\sim \left(\frac{\Lambda}{1+\varepsilon}\right)^{1/\rho} g(1/\log n).$$
Therefore,
$$\liminf_{n\to\infty}\frac{X_n}{g(1/\log n)}  \geq \Lambda^{1/\rho}/(1+\varepsilon)^{1/\rho}.$$
Part (ii) follows by letting $\eps\to0$.
\end{proof}\rm

It will be convenient to use the following notation, reminiscent of $(\ref{eq:recursionSolution})$, 
\begin{align}\label{d29.1}
X_n^m=\sum_{k=0}^{n-m-1} \psi_k B_{n-k}.
\end{align}
Recall that $\lfloor a \rfloor$ denotes the largest integer less than or equal to $a$.

\lemma{\label{lemma:artifficialSequence}

Fix  $\varepsilon>0$ and suppose that $1<1+\delta< \sqrt{1+\eps}$ and $\alpha \in (1, 1+\delta)$. Events 
$$\left\{X_{\lfloor n^\alpha \rfloor}^{\lfloor (n-1)^\alpha \rfloor}\leq g\left(\frac{\Lambda(1+\varepsilon)}{\log n^{\alpha}}\right)\right\}$$
happen infinitely often a.s. }
\begin{proof}
Note that the  random variables  $X_{\lfloor n^\alpha \rfloor}^{\lfloor (n-1)^\alpha \rfloor}$, $n\geq 2$, are jointly independent.

The random variable
$S:=\sum_{k=0}^{\infty}\psi_k B_k$ stochastically majorizes every 
$X_{\lfloor n^\alpha \rfloor}^{\lfloor (n-1)^\alpha \rfloor}$.
By Theorem \ref{thm:polynomialInterpretation} (iii) and 
Lemma \ref{propo:characterization}, for large $n$,
 \begin{align*}
  \P&\left(X_{\lfloor n^\alpha \rfloor}^{\lfloor (n-1)^\alpha \rfloor}\leq g\left(\frac{\Lambda(1+\varepsilon)}{\log n^{\alpha}}\right)\right)
  \geq\P\left(S\leq g\left(\frac{\Lambda(1+\varepsilon)}{\log n^{\alpha}}\right)\right)\\
&\geq 
c_\delta \exp\left(
- \frac{\Lambda (1+\delta)\log n^{\alpha}}{\Lambda(1+\varepsilon)}
\right)
=
c_\delta n^{-\alpha(1+\delta)/(1+\varepsilon)}\geq n^{-\alpha/(1+\delta)} .
 \end{align*}
It follows that
 \begin{align*}
  &\sum_{n=2}^{\infty}\P\left(X_{\lfloor n^\alpha \rfloor}^{\lfloor (n-1)^\alpha \rfloor}\leq g\left(\frac{\Lambda(1+\varepsilon)}{\log n^{\alpha}}\right)\right) = \infty,
 \end{align*}
and, therefore, the claim follows by the Borel-Cantelli lemma.
\end{proof}\rm

\lemma{\label{lem:tailOfTheartificialSequence}
If $\alpha >1$ then, 
$$\lim_{n\to \infty}
 \frac{X_{\lfloor n^\alpha \rfloor}-X_{\lfloor n^\alpha \rfloor}^{\lfloor (n-1)^\alpha \rfloor}}{g(1/\log \lfloor n^\alpha \rfloor)}
 = 0,\quad \text {  a.s.  }$$
}

\begin{proof}

We have
\begin{align*}
X_{\lfloor n^\alpha \rfloor}-X_{\lfloor n^\alpha \rfloor}^{\lfloor (n-1)^\alpha \rfloor}
= \sum_{k=\lfloor n^\alpha \rfloor-\lfloor (n-1)^\alpha \rfloor}^{\lfloor n^\alpha \rfloor} \psi_k B_{\lfloor n^\alpha \rfloor-k}
=\sum_{j=0}^{\lfloor (n-1)^\alpha \rfloor} \psi_{\lfloor n^\alpha \rfloor-\lfloor (n-1)^\alpha \rfloor+j} B_{\lfloor (n-1)^\alpha \rfloor-j} .
\end{align*}
This and the estimate $\psi_k\leq C\beta^k$ from the proof of Theorem \ref{thm:polynomialInterpretation} yield
\begin{align}
X_{\lfloor n^\alpha \rfloor}-X_{\lfloor n^\alpha \rfloor}^{\lfloor (n-1)^\alpha \rfloor}
&\leq \sum_{j=0}^{\lfloor (n-1)^\alpha \rfloor} C\beta^{\lfloor n^\alpha \rfloor-\lfloor (n-1)^\alpha \rfloor+j} B_{\lfloor (n-1)^\alpha \rfloor-j} \label{eq:inequalityDiffReducedSequence} \\ 
\nonumber&=C\beta^{\lfloor n^\alpha \rfloor-\lfloor (n-1)^\alpha \rfloor}\sum_{j=0}^{\lfloor (n-1)^\alpha \rfloor} \beta^{j} B_{\lfloor (n-1)^\alpha \rfloor-j}.
\end{align}
Recall that $\beta\in(0,1)$ and $\alpha>1$.
It is not hard to show that there exists $c>0$ such that $\lfloor n^\alpha \rfloor-\lfloor (n-1)^\alpha \rfloor\geq cn^{\alpha-1}$ for large  $n$, so
\begin{align}
X_{\lfloor n^\alpha \rfloor}-X_{\lfloor n^\alpha \rfloor}^{\lfloor (n-1)^\alpha \rfloor}
\nonumber&\leq C\beta^{cn^{\alpha-1}}\sum_{j=0}^{\lfloor (n-1)^\alpha \rfloor} \beta^{j} B_{\lfloor (n-1)^\alpha \rfloor-j}\\ 
\nonumber&\leq  C\beta^{(c/2)n^{\alpha-1}}\sum_{j=0}^{\lfloor (n-1)^\alpha \rfloor} \beta^{j} \left(\beta^{(c/2)n^{\alpha-1}} B_{\lfloor (n-1)^\alpha \rfloor-j}\right)\\
\nonumber&\leq  C\beta^{(c/2)n^{\alpha-1}}\sum_{j=0}^{\lfloor (n-1)^\alpha \rfloor} \beta^{\lfloor (n-1)^\alpha \rfloor-j} \left(\beta^{(c/2)(n^{\alpha})^{(\alpha-1)/\alpha}} B_{j}\right)\\
&\leq  C\beta^{(c/2)n^{\alpha-1}}\sum_{j=0}^{\lfloor (n-1)^\alpha \rfloor} \beta^{\lfloor (n-1)^\alpha \rfloor-j} \left(\beta^{(c/2)j^{(\alpha-1/\alpha}} B_{j}\right).\label{n24.1}
\end{align}

We use the assumption that $\E[(\log^+ B)^{\alpha/(\alpha-1)}]<\infty$ and inequality $(\ref{eq:expectationLowerBound})$ to see that, for any $c_1>0$,
\begin{align*}
&\sum_{n=1}^{\infty}\P\left( B_n\geq  \beta^{-c_1n^{(\alpha-1)/\alpha}}\right)
=\sum_{n=1}^{\infty}\P\left(\log B_n\geq   c_1n^{(\alpha-1)/\alpha}\log \beta^{-1}\right)\\
&\leq \sum_{n=1}^{\infty}\P\left(\log^+ B_n\geq   c_1n^{(\alpha-1)/\alpha}\log \beta^{-1}\right)
= \sum_{n=1}^{\infty}\P\left(
\frac{ 1}{ c_1 \log \beta^{-1}} \log^+ B_n\geq   n^{(\alpha-1)/\alpha}\right)\\
&= \sum_{n=1}^{\infty}\P\left(\left(
\frac{ 1}{ c_1 \log \beta^{-1}}\right)^{\alpha/(\alpha-1)} 
\left(\log^+ B_n\right)^{\alpha/(\alpha-1)}\geq   n\right)\\
&\leq \E\left (\left(
\frac{ 1}{ c_1 \log \beta^{-1}}\right)^{\alpha/(\alpha-1)} 
\left(\log^+ B_n\right)^{\alpha/(\alpha-1)}\right)+1
<\infty.
\end{align*}
If we take $c_1=c/2$ then, by the Borel-Cantelli lemma, 
with probability 1,
\begin{align*}
 K:= \sup_{n\geq 1} \beta^{(c/2) n^{(\alpha-1)/\alpha}}B_n
< \infty.
\end{align*}
This and \eqref{n24.1} imply that 
\begin{align*}
 X_{\lfloor n^\alpha \rfloor}-X_{\lfloor n^\alpha \rfloor}^{\lfloor (n-1)^\alpha \rfloor} &\leq  C\beta^{(c/2)n^{\alpha-1}}\sum_{j=0}^\infty \beta^{j}  K
= C\beta^{(c/2)n^{\alpha-1}}\frac 1 {1-\beta} K.
\end{align*}
Thus, a.s.,
\begin{align*}
\limsup_{n\to \infty}
\frac{X_{\lfloor n^\alpha \rfloor}-X_{\lfloor n^\alpha \rfloor}^{\lfloor (n-1)^\alpha \rfloor}}{g(1/\log \lfloor n^\alpha \rfloor)} 
\leq
\limsup_{n\to \infty}
\frac{C\beta^{\frac{c}{2}n^{\alpha-1}}}{ g(1/\log \lfloor n^\alpha \rfloor)} \cdot \frac 1 {1-\beta} K =0.
\end{align*}
\end{proof}

\begin{proof}[Proof of Theorem \ref{thm:lowerEnvelope2}]
By Lemmas \ref{assyptoicInverseLemma},
\ref{lemma:artifficialSequence} and
 \ref{lem:tailOfTheartificialSequence},  for every $\varepsilon>0$,
\begin{align*}
\liminf_{n\to \infty}&\frac{ X_n}{g(1/\log n)}
\leq \liminf_{n\to \infty} \frac{X_{\lfloor n^\alpha \rfloor}}{g(1/\log  \lfloor n^\alpha \rfloor)}\\
&=
\liminf_{n\to \infty}
\left( \frac{X_{\lfloor n^\alpha \rfloor}-X_{\lfloor n^\alpha \rfloor}^{\lfloor (n-1)^\alpha \rfloor}}{g(1/\log  \lfloor n^\alpha \rfloor)}
+  \frac{X_{\lfloor n^\alpha \rfloor}^{\lfloor (n-1)^\alpha \rfloor}}{g(1/\log   \lfloor n^\alpha \rfloor)} \right)\\
&=
\liminf_{n\to \infty}
\left( \frac{X_{\lfloor n^\alpha \rfloor}-X_{\lfloor n^\alpha \rfloor}^{\lfloor (n-1)^\alpha \rfloor}}{g(1/\log  \lfloor n^\alpha \rfloor)}
+  \frac{X_{\lfloor n^\alpha \rfloor}^{\lfloor (n-1)^\alpha \rfloor}}
{g(\Lambda(1+\varepsilon)/\log   \lfloor n^\alpha \rfloor)} \Lambda^{1/\rho}(1+\varepsilon)^{1/\rho}\right)\\
&\leq \Lambda^{1/\rho}(1+\varepsilon)^{1/\rho}.
\end{align*}
Hence, $\liminf_{n\to \infty}  X_n/g(1/\log n) \leq \Lambda^{1/\rho}$, a.s. 
The theorem follows from this and Lemma \ref{lemma:easySideOfBorelCantelli} (ii). 
\end{proof}

\section{Random multiplicative coefficient}\label{sec:RandomCoefficientIndependent}\rm
So far, we only considered products of \IGFT\ random variables with  constants. In \eqref{eq:AR1}, the multiplicative coefficient 
in the stochastic fixed point equation was a constant. In this section  we will look into the case 
when these constants are replaced with nonnegative random variables independent of other random elements of the model.

In order to solve the stochastic fixed point equation $X\stackrel{d}{=}AX+B$
we will need an assumption on the form of dependence between  random variables $A$ and $B$. In this paper we will assume that $A$ and $B$ are positively quadrant dependent. 
This is a well known dependence condition,
used in various models in insurance and actuarial sciences. We start with the standard definition of positive quadrant dependence. 

\begin{defin}
We will call random variables $X$ and $Y$ positively quadrant dependent if
\begin{equation}
 \P(X>x,Y>y)\geq \P(X>x)\P(Y>y),\label{eq:positivelyQuadrantDependent}
\end{equation}
for all $x,y\in \R$. 
\end{defin}
\remark{Note that if two random variables are independent then they are also positively quadrant dependent.}

For the purposes of this paper the following characterization
of positive quadrant dependence
 will be more useful than the original definition.

\begin{lemma}\label{n11.1}
The random variables $X$ and $Y$ are positively quadrant dependent if and only if
\begin{equation}
 \P(X\leq x,Y\leq y)\geq \P(X\leq x)\P(Y \leq y)\label{eq:positivelyQuadrantDependentCharacterization}
\end{equation}
for all $x,y\in \R$. 
\end{lemma}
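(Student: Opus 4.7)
The plan is to derive a simple identity relating the two difference expressions
\[
D_1(x,y):=\P(X>x,Y>y)-\P(X>x)\P(Y>y)
\]
and
\[
D_2(x,y):=\P(X\leq x,Y\leq y)-\P(X\leq x)\P(Y\leq y),
\]
and then observe that the equivalence of \eqref{eq:positivelyQuadrantDependent} and \eqref{eq:positivelyQuadrantDependentCharacterization} reduces to checking $D_1\equiv D_2$.

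The first step is to rewrite $\P(X>x,Y>y)$ using inclusion--exclusion on the complementary event $\{X\leq x\}\cup\{Y\leq y\}$, yielding
\[
\P(X>x,Y>y)=1-\P(X\leq x)-\P(Y\leq y)+\P(X\leq x,Y\leq y).
\]
Next I expand the product $\P(X>x)\P(Y>y)=(1-\P(X\leq x))(1-\P(Y\leq y))$, which gives
\[
\P(X>x)\P(Y>y)=1-\P(X\leq x)-\P(Y\leq y)+\P(X\leq x)\P(Y\leq y).
\]
Subtracting the two identities, the $1$'s and the marginal terms cancel and I obtain $D_1(x,y)=D_2(x,y)$ for every $x,y\in\R$.

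From $D_1\equiv D_2$ the lemma is immediate: \eqref{eq:positivelyQuadrantDependent} says $D_1(x,y)\geq 0$ for all $x,y$, while \eqref{eq:positivelyQuadrantDependentCharacterization} says $D_2(x,y)\geq 0$ for all $x,y$, and these are the same statement. There is no real obstacle here; the only thing to be careful about is writing the inclusion--exclusion cleanly and not introducing any strict-versus-nonstrict mismatch in the events, which is why passing through $\{X\leq x\}\cup\{Y\leq y\}$ as the complement of $\{X>x,Y>y\}$ is the cleanest route.
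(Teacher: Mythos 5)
Your proof is correct and amounts to the same elementary computation as the paper's: the paper rewrites \eqref{eq:positivelyQuadrantDependent} into \eqref{eq:positivelyQuadrantDependentCharacterization} by adding $-\P(X>x)$ and then $\P(Y\leq y)$ to both sides, whereas you package the same algebra as the single identity $D_1\equiv D_2$ via inclusion--exclusion, but both arguments hinge on precisely the same cancellation.
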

\begin{proof}
We add $-\P(X>x)$ to both sides
of \eqref{eq:positivelyQuadrantDependent} to obtain 
$$-\P(X>x,Y\leq y)\geq -\P(X>x)\P(Y\leq y).$$
We add $\P(Y\leq y)$ to  both sides of the last inequality to obtain \eqref{eq:positivelyQuadrantDependentCharacterization}.
This process can be reversed so \eqref{eq:positivelyQuadrantDependent} can be derived from \eqref{eq:positivelyQuadrantDependentCharacterization}.
\end{proof}

Recall Definition \ref{n8.1} of  essential infimum of a random variable.

\thm{\label{thm:IGFTofAX}
Suppose that $A$ is a nonnegative random variable and its essential infimum is equal to $a$. If $X$ is an $\IGFT^\rho_L(\lambda)$-random variable and $X$ and $A$ are positively quadrant dependent
then $AX$ is an \IGFT$^\rho_L(a^\rho\lambda)$-random variable.
 }
\begin{proof}
Since $a$ is the essential infimum of $A$, we have $a\leq A$, a.s., so
$$\P(AX<x)=\P(AX<x, a\leq A )\leq \P(aX<x).$$
Using Proposition \ref{propo:closureOnMultiplication}, 
$$\limsup_{x\to0^+}x^\rho L(x)\log \P(AX<x)\leq \lim_{x\to0^+}x^\rho L(x)\log \P(aX<x)=-a^\rho\lambda.$$

Let $\varepsilon >0$. The assumption that $A$ and $X$ are positively quadrant dependent implies that
\begin{align*}
 \P(AX<x) &\geq \P(AX<x, a\leq A\leq a+\varepsilon )
 \geq \P((a+\varepsilon)X<x, A\leq a+\varepsilon )\\
&\geq\P((a+\varepsilon)X<x)\P( A\leq a+\varepsilon ).
\end{align*}
By Proposition \ref{propo:closureOnMultiplication},
\begin{align*}
 &\liminf_{x\to0^+}x^\rho L(x)\log \P(AX<x)\\
&\geq \lim_{x\to0^+}x^\rho L(x)(\log \P((a+\varepsilon)X<x)+\log \P( a\leq A\leq a+\varepsilon ))\\ 
&=-(a+\varepsilon)^\rho\lambda.
\end{align*}
The proof is completed by letting $\varepsilon \to 0^+$.
\end{proof}

\corol{\label{corol:RandomCoefficientIndependentTail}
Suppose that independent random vectors $(A_i,X_i)$ are such that
for all $i=1,\ldots, n$,
\begin{enumerate}[(a)]
 \item $A_i$ and $X_i$ are nonnegative and positively quadrant dependent;
 \item  $X_i$ is an \IGFT$^\rho_L(\lambda_i)$-random variable.
\end{enumerate}
Then $A_1X_1+\ldots + A_nX_n$ is an \IGFT$^\rho_L(\Lambda)$-random variable with the parameter
$$\Lambda =\left((\essinf(A_1)^\rho\lambda_1)^{1/(1+\rho)}+\ldots + (\essinf(A_n)^\rho\lambda_n)^{1/(1+\rho)}\right)^{1+\rho}.$$
}\begin{proof}
  The corollary follows from Theorem \ref{thm:IGFTofAX} and Proposition \ref{thm:sumOfVariablesTail}.
 \end{proof}\rm

\rm

\thm{\label{thm:seriesConvegenceABIndependent}
Let $(A_i,B_i)_{i\geq 1}$ be an i.i.d. sequence of two-dimensional  vectors with the following properties. 
\begin{enumerate}[(i)]
 \item $A_1$ and $B_1$ are nonnegative and positively quadrant dependent.
 \item $\E[\log A_1]<0$ and $\E[\log^+B_1]<\infty$.
 \item $B_1$ is an \IGFT$^\rho_L(\lambda)$-random variable.
\end{enumerate}

(a) The series
$$\sum_{i=1}^\infty \left(\prod_{j=1}^{i-1}A_j\right)B_i$$
converges a.s. to a finite \IGFT$^\rho_L(\Lambda)$-random variable $S$, where
\begin{align}\label{n12.1}
\Lambda =\left(1-\essinf(A_1)^{\rho/(1+\rho)}\right)^{-1-\rho}
\lambda.
\end{align}

(b) The stochastic fixed point equation
$X\stackrel{d}{=}A_1 X+B_1$, where $X$ and $(A_1, B_1)$ are independent,
has a unique solution with the same distribution as that of $S$.
}\rm

We will need the following lemma.

\begin{lemma}\label{lemma:Tchen1980}
Assume that for all $a,b\in \R$,
$$\P(A\leq a, B\leq b)\geq \P(A'\leq a,B'\leq b), $$
with $A\stackrel{d}{=}A'$ and  $B\stackrel{d}{=}B'$. If $h(x,y)$ is bounded and $\frac{\partial^2}{\partial x\partial y}h\geq 0$
then $$\E[h(A,B)]\geq \E[h(A',B')].$$
\end{lemma}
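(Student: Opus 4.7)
The plan is to use the Hoeffding-type integration-by-parts identity, which converts an expectation of a supermodular function into an integral against the joint lower-tail distribution function. Once the representation is in hand, the inequality drops out immediately because both factors in the integrand are non-negative.

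First, I would reduce to the case where $A, B, A', B'$ are all supported in a bounded rectangle $[x_0, x_1]\times [y_0, y_1]$. This is justified by truncation: replace $A$ by $A\wedge N \vee (-N)$ and similarly for the others, note that the hypothesis on the joint CDFs is preserved under such monotone transformations (a fact that can be checked directly on the two-dimensional CDFs), and use boundedness of $h$ together with dominated convergence to pass $N\to\infty$ at the end. With this reduction we may pick $x_0, y_0$ strictly below the essential infima of $A, A'$ and $B, B'$, respectively.

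Next, for $(a,b)\in[x_0,x_1]\times[y_0,y_1]$, I would write the telescoping identity
\begin{align*}
h(a,b) - h(a,y_0) - h(x_0,b) + h(x_0,y_0) = \int_{x_0}^{a}\int_{y_0}^{b} \frac{\partial^2 h}{\partial x\, \partial y}(s,t)\,dt\,ds,
\end{align*}
and rewrite the double integral as
\begin{align*}
\int_{x_0}^{x_1}\int_{y_0}^{y_1} \frac{\partial^2 h}{\partial x\, \partial y}(s,t)\, \mathbf{1}\{s\leq a\}\,\mathbf{1}\{t\leq b\}\,dt\,ds.
\end{align*}
Substituting $(a,b)=(A,B)$, taking expectation, and applying Fubini (valid since $h$ is bounded, so the left side is bounded and the right-hand integrand is non-negative) gives
\begin{align*}
\E[h(A,B)] &= \E[h(A,y_0)] + \E[h(x_0,B)] - h(x_0,y_0) \\
&\quad + \int_{x_0}^{x_1}\int_{y_0}^{y_1}\frac{\partial^2 h}{\partial x\, \partial y}(s,t)\,\P(A\leq s,\, B\leq t)\,dt\,ds.
\end{align*}
The same identity holds with $(A,B)$ replaced by $(A',B')$.

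Now I subtract the two expressions. Since $A\stackrel{d}{=}A'$ and $B\stackrel{d}{=}B'$, the three ``marginal'' terms $\E[h(\cdot,y_0)]$, $\E[h(x_0,\cdot)]$, and $h(x_0,y_0)$ cancel exactly, leaving
\begin{align*}
\E[h(A,B)] - \E[h(A',B')] = \int_{x_0}^{x_1}\int_{y_0}^{y_1}\frac{\partial^2 h}{\partial x\, \partial y}(s,t)\bigl[\P(A\leq s,B\leq t) - \P(A'\leq s,B'\leq t)\bigr]\,dt\,ds.
\end{align*}
Both factors in the integrand are non-negative by hypothesis, so the difference is non-negative. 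Undoing the truncation finishes the proof.

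The main technical obstacle is the truncation/regularity step: the lemma does not explicitly assume that $\partial^2 h/\partial x\partial y$ is integrable or even that $h$ is $C^2$ in the classical sense, so some care is needed to make sense of the integration-by-parts identity. If $h$ is not sufficiently smooth a priori, I would mollify $h$ by convolution with a non-negative smooth bump (which preserves boundedness, the sign of the mixed second derivative, and supermodularity), prove the inequality for the mollification, and pass to the limit using bounded convergence. Once the smoothness issue is handled, everything else is a routine application of Fubini.
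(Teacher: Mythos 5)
The paper itself gives no proof here---it simply cites \cite[Thm.~2]{tchen1980}. Your proposal reconstructs from scratch the standard Hoeffding/Tchen integration-by-parts argument, which is exactly the technique underlying Tchen's theorem, so what you have written is a self-contained proof of the cited result in the required special case. The overall strategy (truncate, telescope, rewrite the double integral against an indicator, apply Tonelli, cancel the matching-marginal terms, and conclude from the sign of the mixed partial) is sound, and your remarks about the truncation being compatible with the joint-CDF hypothesis, and about mollifying $h$ if it is not $C^2$, are correct ways to handle the regularity issues.

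There is one slip you should fix before this is airtight. After rewriting $\int_{x_0}^a\int_{y_0}^b\partial^2_{xy}h$ with indicators $\1\{s\leq a\}\1\{t\leq b\}$ and taking the expectation over $(a,b)=(A,B)$, the term that appears is $\E[\1\{s\leq A\}\1\{t\leq B\}]=\P(A\geq s,\,B\geq t)$, not $\P(A\leq s,\,B\leq t)$ as written. Thus the Hoeffding identity you obtain after cancelling the marginal terms is
\begin{equation*}
\E[h(A,B)]-\E[h(A',B')]
=\int_{x_0}^{x_1}\int_{y_0}^{y_1}\frac{\partial^2 h}{\partial x\,\partial y}(s,t)\,
\bigl[\P(A\geq s,\,B\geq t)-\P(A'\geq s,\,B'\geq t)\bigr]\,dt\,ds.
\end{equation*}
The conclusion still follows, because with $A\stackrel{d}{=}A'$ and $B\stackrel{d}{=}B'$ the lower-orthant ordering $\P(A\leq a,B\leq b)\geq\P(A'\leq a,B'\leq b)$ is equivalent to the upper-orthant ordering $\P(A\geq s,B\geq t)\geq\P(A'\geq s,B'\geq t)$ (inclusion--exclusion plus equality of marginals; this is the same elementary computation as in Lemma~\ref{n11.1}, applied to two arbitrary couplings with common marginals rather than to a vector and its independent copy). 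You should make this translation explicit, since the lemma's hypothesis is stated in lower-orthant form while the integrand naturally produces the upper-orthant form.
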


\begin{proof}
The lemma is a special case of \cite[Thm. 2]{tchen1980}.
\end{proof}

\begin{proof}[Proof of Theorem \ref{thm:seriesConvegenceABIndependent}]
(a)
To simplify notation, let $(A,B)$ have the same distribution as $(A_1,B_1)$.
It follows from Theorem \ref{thm:fixedPointRepresentation2}  that $S$ is the solution to the stochastic 
fixed point equation $X\stackrel{d}{=}AX+B$.
We set $a =\essinf(A)$, $f_B(z)=-\log \E[e^{-zB}]$ and $f_S(z)=-\log \E[e^{-zS}]$. 
We have assumed that $\E[\log A]<0$ so $a\in[0,1)$.
By Theorem \ref{thm:Bruijin},
 $f_B$ is regularly varying at infinity with index $\rho/(1+\rho)$. By the same theorem, it will suffice to  show 
\begin{align}\label{n11.3}
\lim_{z\to\infty}\frac{f_S(z)}{f_B(z)}=\left(1-a^{\rho/(1+\rho)}\right)^{-1}.
\end{align}
If $S$ is independent of $(A,B)$ then $S\stackrel{d}{=}AS+B$ and, therefore,
\begin{align}
\label{eq:logLaplacianTransform}
 e^{-f_S(z)}=\E e^{-zS}=\E e^{-z(AS+B)}
\leq \E e^{-z(aS+B)} 
=  \E e^{-zaS} \E e^{-zB} 
= e^{ - f_S(az)-f_B(z)}, 
\end{align}
and
\begin{align}
\label{n11.2}
 e^{-f_S(z)}&=\E e^{-zS}=\E e^{-z(AS+B)}
=\E \left(\E\left( e^{-z(AS+B)} \mid A,B\right)\right)\\
&=\E \left(e^{-zB} \E\left( e^{-zAS}  \mid A,B\right)\right)
=\E \left(e^{-zB} e^{- f_S(zA)}\right)
=\E \left(e^{- f_S(zA)-zB}\right).\notag
\end{align}
It follows from \eqref{eq:logLaplacianTransform} that $f_S(z)\geq f_S(az)+f_B(z)$ and 
\begin{align*}
 \liminf_{z\to\infty}\frac{f_S(z)}{f_B(z)}&\geq \liminf_{z\to\infty} \frac{f_S(az)}{f_B(z)}+1
=\liminf_{z\to\infty} \frac{f_B(az)}{f_B(z)}\frac{f_S(az)}{f_B(az)}+1
\\
&=a^{\rho/(1+\rho)}\liminf_{z\to\infty} \frac{f_S(z)}{f_B(z)}+1,
\end{align*}
hence 
\begin{align}\label{n11.4}
\liminf_{z\to\infty} \frac{f_S(z)}{f_B(z)} \geq \left(1-a^{\rho/(1+\rho)}\right)^{-1}.
\end{align}

We will apply Lemma \ref{lemma:Tchen1980} to the function
\begin{align*}
h(x,y) = \exp(- f_S(xz) -yz) = \E \exp( - xzS - yz)
\end{align*}
and independent random variables $A'$ and $B'$ such that 
$A'\stackrel{d}{=}A$ and  $B'\stackrel{d}{=}B$.
Since  $A$ and $B$ are positively quadrant dependent, Lemma \ref{n11.1} implies that
\begin{align*}
\P(A\leq a, B\leq b)\geq \P(A\leq a)\P( B\leq b) 
=\P(A'\leq a)\P(B'\leq b)
=\P(A'\leq a,B'\leq b).
\end{align*}
It is easy to check that $\frac{\partial^2}{\partial x\partial y}h\geq 0$.
Hence, by Lemma \ref{lemma:Tchen1980}, for a fixed $\eps \in (0, 1-a)$,
\begin{align*}
 \E\left[e^{-f_S(Az)-zB}\right]&\geq 
  \E\left[e^{-f_S(A'z)-zB'}\right]
= \E\left[e^{-f_S(A'z)}\right]\E\left[e^{-zB'}\right]
=\E\left[e^{-f_S(Az)}\right]\E\left[e^{-zB}\right]\\
& \geq \E\left[e^{-f_S(Az)}\1_{\{A<a+\varepsilon\}}\right]e^{-f_B(z)}
\geq e^{-f_S((a+\varepsilon)z)-f_B(z)}\P(A<a+\varepsilon).
\end{align*}
This and \eqref{n11.2} imply that
\begin{equation*} 
f_{S}((a+\varepsilon)z)+f_B(z)-\log\P(A<a+\varepsilon)\geq f_S(z). 
\end{equation*}
Substituting $(a+\varepsilon)^k z$ for $z$ in the last formula yields
\begin{equation*} 
f_{S}((a+\varepsilon)^{k+1}z)+f_B((a+\varepsilon)^k z)-\log\P(A<a+\varepsilon)\geq f_S((a+\varepsilon)^k z). 
\end{equation*}
The telescoping sum argument gives
\begin{align}
\nonumber f_S(z)-f_S((a+\varepsilon)^{n+1}z)&\leq \sum_{k=0}^n f_B((a+\varepsilon)^kz)-(n+1)\log \P(A<a+\varepsilon)\\
&\leq \sum_{k=0}^\infty f_B((a+\varepsilon)^kz)-(n+1)\log \P(A<a+\varepsilon). \label{eq:twoSumsComparison}
\end{align}
Note  that 
$$\sum_{k=0}^{\infty } f_B((a+\varepsilon)^kz)=-\log \E\left[\exp\left(-z\sum_{k=0}^{\infty }(a+\varepsilon)^kB_k\right)\right].$$
It follows from Proposition \ref{propo:closureOnMultiplication} and
 Theorem \ref{prop:expectationOfLogarithm} that $\sum_{k=0}^{\infty }(a+\varepsilon)^kB_k$ is a finite $\IED^\rho_L(\Lambda_1)$-random variable, where
 $$ \Lambda_1=\lambda\left(\sum_{k=0}^{\infty}(a+\eps)^{k\rho/(1+\rho)}\right)^{1+\rho}
= \frac{\lambda} { (1- (a+\eps)^{\rho/(1+\rho)})^{1+\rho}}.
$$
Hence, by Theorem \ref{thm:Bruijin} we have 
\begin{equation}\label{eq:sumAssymptotics}
 \sum_{k=0}^{\infty } f_B((a+\varepsilon)^kz)\sim \left(1-(a+\eps)^{\rho/(1+\rho)}\right)^{-1}f_B(z).
\end{equation}


If we take $n=\left\lceil- \log z/\log (a+\varepsilon) \right\rceil$ then
$(a+\varepsilon)^2 \leq
(a+\varepsilon)^{n+1}z \leq 1$ and, therefore $|f_S((a+\varepsilon)^{n+1}z)|< c_1$. Also, 
\begin{align*}
|(n+1)\log \P(A<a+\varepsilon)| \leq
\frac{|2\log \P(A<a+\varepsilon)|}{\log (a+\varepsilon)}
\log z .
\end{align*}
These observations, the fact that $f_B$ is regularly varying at infinity with index $\rho/(1+\rho)>0$,
 \eqref{eq:twoSumsComparison} and \eqref{eq:sumAssymptotics}, imply that
\begin{align}\label{n11.5}
\limsup_{z\to \infty} \frac{f_S(z)}{f_B(z)}\leq \lim_{z\to\infty}\sum_{k=0}^\infty\frac{ f_B((a+\varepsilon)^kz)}{f_B(z)}=\left(1-(a+\eps)^{\rho/(1+\rho)}\right)^{-1}.
\end{align}
This completes the proof of \eqref{n11.3} because
the above estimate holds for all sufficiently small $\eps>0$ and we already have \eqref{n11.4}.

(b) Part (b) follows from Theorem \ref{thm:fixedPointRepresentation2} and part (a). 
\end{proof}

Methods similar to those in the proof of Theorem \ref{thm:seriesConvegenceABIndependent} were used in \cite{Kolodziejek} to analyze light-tailed solutions to $X\stackrel{d}{=}AX+B$.

We will now interpret the parameter $\Lambda$ in \eqref{n12.1} in a  way similar to that  in Proposition \ref{thm:lowerEnvelope}.

\thm{\label{thm:lowerEnvelope3}
Let $(A_i,B_i)_{i=1}^\infty$ be an i.i.d. sequence of two-dimensional  vectors with the following properties. 
\begin{enumerate}[(i)]
 \item $A_1$ and $B_1$ are nonnegative and positively quadrant dependent random variables. 
 \item There exists $\beta\in (0,1)$ such that $A_1\leq \beta$, a.s. 
\item $\E[(\log^+B_1)^s]<\infty$ for all $s>0$.
 \item $B_1$ is an \IGFT$^\rho_L(\lambda)$-random variable.
\end{enumerate}
Let $X_0=0$ and
\begin{align}\label{d28.1}
X_n=A_nX_{n-1}+B_n,\quad n\geq 1.
\end{align}
Recall $\Lambda$ defined in \eqref{n12.1} and let  $g$ be the asymptotic inverse of $x\mapsto x^{\rho}L(x)$ at $0$.
Then
$$
\liminf_{n\to\infty} \frac{X_n}{g(1/\log n)} =\Lambda^{1/\rho},\ a.s.
$$
}

\rm The structure of the proof will be similar to that  of Theorem \ref{thm:lowerEnvelope2}, however some new technical steps will be needed.
In all lemmas preceding the proof of Theorem \ref{thm:lowerEnvelope3}, we
will use the same notation and  make the same assumptions as in the theorem.

\lemma{\label{lemma:stochasticallyDecresingSequence}
\begin{enumerate}[(a)]
\item 
Let $S$ be defined as in \eqref{eq:solution:series2}. We have
$X_n\stackrel{d}{\to} S$ as $n\to\infty$ and 
\begin{equation}\label{eq:stochasticallyDecresingSequence}
\P(S\leq x)\leq \P(X_{n}\leq x)\leq \P(X_{n-1}\leq x),
\end{equation} 
for all $n\geq 1$ and $x\geq 0$.
\item Let $a=\essinf A_1$. For every $n\geq 1$, $X_n$ is an $\IGFT^\rho_L(\Lambda_n)$-random variable with  
\begin{align}\label{n12.5}
\Lambda_n=\lambda\left(\frac{1-a^{\frac{n\rho}{1+\rho}}}{1-a^{\rho/(1+\rho)}}\right)^{1+\rho}.
\end{align}
\end{enumerate}
 }
\begin{proof}

Set $S_n=\sum_{i=1}^n \left(\prod_{j=1}^{i-1}A_j\right)B_i$. Definition \eqref{d28.1} implies the following representation, $X_n=\sum_{i=1}^n \left(\prod_{j=n-i+2}^{n}A_j\right)B_{n-i+1}$.
This and the change of index $i\mapsto n-i+1$ easily show that
$X_n\stackrel{d}{=}S_n$. Therefore, $\P(X_{n}\leq x) =\P(S_n\leq x)$. Since $A_j$'s and $B_j$'s are non-negative, $S_n\uparrow S$ a.s. 
All claims made in part (a)  follow easily from these observations. 

The definition of the essential infimum $a$ and the assumption that $A_i$'s and $B_i$'s are non-negative imply that
\begin{align}\label{n12.3}
\P(S_n\leq x)\leq \P(a^{n-1}B_n+a^{n-2}B_{n-2}+\ldots + aB_2+ B_1\leq x).
\end{align}

We use the assumption that $(A_k,B_k)$, $k\geq 1$ are i.i.d., $(A_1,B_1)$ are positively quadrant dependent, and
Lemma \ref{n11.1} to see that for $\varepsilon>0$,
\begin{align}\label{n12.4}
\P(S_n\leq x) & \geq \P(S_n\leq x, A_1\leq a+\varepsilon, A_2\leq a+\varepsilon, \ldots , A_{n-1}\leq a+\varepsilon)\\
 & \geq \P\Big((a+\varepsilon)^{n-1}B_n+(a+\varepsilon)^{n-2}B_{n-2}+\ldots + (a+\varepsilon)B_2+ B_1\leq x, \notag \\
 & \qquad A_1\leq a+\varepsilon, A_2\leq a+\varepsilon, \ldots , A_{n-1}\leq a+\varepsilon\Big) \notag \\
  & \geq \P\Big((a+\varepsilon)^{n-1}B_n+(a+\varepsilon)^{n-2}B_{n-2}+\ldots + (a+\varepsilon)B_2+ B_1\leq x, \notag \\
 & \qquad  A_2\leq a+\varepsilon, \ldots , A_{n-1}\leq a+\varepsilon\Big) \P(A_1\leq a+\varepsilon) \notag \\
   & \geq \P\Big((a+\varepsilon)^{n-1}B_n+(a+\varepsilon)^{n-2}B_{n-2}+\ldots + (a+\varepsilon)B_2+ B_1\leq x, \notag \\
 & \qquad  A_3\leq a+\varepsilon, \ldots , A_{n-1}\leq a+\varepsilon\Big) \P(A_1\leq a+\varepsilon)\P(A_2\leq a+\varepsilon) \notag \\
 & \dots \notag \\
							&\geq \P((a+\varepsilon)^{n-1}B_n+(a+\varepsilon)^{n-2}B_{n-2}+\ldots + (a+\varepsilon)B_2+ B_1\leq x)\times \notag \\
							&\qquad\times \P(A_1\leq a+\varepsilon)^{n-1}. \notag
\end{align}

We have
\begin{align}\label{n12.2}
\sum_{k=0}^{n-1} (a^k)^{\rho/(1+\rho)}
=\frac{1-a^{n\rho/(1+\rho)}}{1-a^{\rho/(1+\rho)}},
\qquad 
\sum_{k=0}^{n-1} ((a+\eps)^k)^{\rho/(1+\rho)}
=\frac{1-(a+\eps)^{n\rho/(1+\rho)}}{1-(a+\eps)^{\rho/(1+\rho)}}.
\end{align}

Recall that $\P(X_n\leq x)=\P(S_n\leq x)$ and use \eqref{n12.3}, \eqref{n12.4}, \eqref{n12.2} and Proposition \ref{thm:sumOfVariablesTail} to see that
\begin{align*}
\limsup_{x\to 0^+} x^\rho L(x) \log \P(X_n\leq x) 
&=
\limsup_{x\to 0^+} x^\rho L(x) \log \P(S_n\leq x) \leq \lambda\left(\frac{1-a^{\frac{n\rho}{1+\rho}}}{1-a^{\frac{\rho}{1+\rho}}}\right)^{1+\rho},\\
\liminf_{x\to 0^+} x^\rho L(x)\log \P(X_n\leq x)
&=\liminf_{x\to 0^+} x^\rho L(x)\log \P(S_n\leq x)
\geq
\lambda\left(\frac{1-(a+\varepsilon)^{\frac{n\rho}{1+\rho}}}{1-(a+\varepsilon)^{\frac{\rho}{1+\rho}}}\right)^{1+\rho}.
\end{align*}
We complete the proof of (b)
by letting $\varepsilon\to 0$.
\end{proof}

\lemma{\label{lemma:easySideOfBorelCantelli2}

(i) For every $\varepsilon>0$,
$$\left\{X_n\leq g\left(\frac{\Lambda}{(1+\varepsilon)\log n}\right)\right\}$$
happens finitely often almost surely.

(ii)  We have
$$\liminf_{n\to\infty}\frac{X_n}{g(1/\log n)}  \geq \Lambda^{1/\rho}\ a.s.$$
}\begin{proof}
(i) For any $\eps>0$
there exists $\delta\in(0,1)$ such that $\gamma := (1-\delta)(1+\varepsilon/2)>1$. 
Assumption (ii) of Theorem \ref{thm:lowerEnvelope3} implies that $a<1$ so $\Lambda_n$ defined in \eqref{n12.5} converge to $\Lambda$ as $n\to\infty$.
By Lemma \ref{propo:characterization} and
part (b) of Lemma \ref{lemma:stochasticallyDecresingSequence},
there exist $C_\delta$, $n_0$ and $x_0>0$ such that $\P(X_{n_0}\leq x)\leq C_\delta e^{-\Lambda(1-\delta)/(x^\rho L(x))}$ for all $x\in (0,x_0)$.
By part (a) of Lemma \ref{lemma:stochasticallyDecresingSequence}, for $n\geq n_0$ and $x\in (0,x_0)$,
$$\P(X_n\leq x)\leq C_\delta e^{-\Lambda(1-\delta)/(x^\rho L(x))}.$$
It follows that, for large $n$,
\begin{align*}
 &\P\left(X_n\leq g\left(\frac{\Lambda}{(1+\varepsilon)\log n}\right)\right)
\leq C_\delta e^{-(1+\varepsilon/2)(1-\delta)\log n}=C_{\delta}n^{-\gamma}.
\end{align*}
Hence, 
$$\sum_{n=1}^{\infty}\P\left(X_n\leq g\left(\frac{\Lambda}{(1+\varepsilon)\log n}\right)\right)<\infty,$$
and the claim follows by the Borel-Cantelli lemma.

(ii) Part (i) implies that for every $\eps >0$,
$$\liminf_{n\to\infty}\frac{X_n}{g(1/\log n)}  \geq \frac{\Lambda^{1/\rho}}{(1+\varepsilon)^{1/\rho}}.$$
Part (ii) follows by letting $\eps\to0$. 
\end{proof}

\rm

From  definition \eqref{d28.1}, we obtain for $n\geq m$,
\begin{align}\label{d29.2}
 X_n&=B_n+A_nX_{n-1}=B_n+A_nB_{n-1}+ A_nA_{n-1}X_{n-2}=\ldots\\
&=\sum_{j=m+1}^n\left(\prod_{k=j+1}^n A_k\right)B_j + \left(\prod_{k=m+1}^nA_k\right)X_m.\notag
\end{align}
We rearrange terms and define new random variables,
\begin{equation}\label{eq:cutOffSequenceDefinition}
X_n^m:=X_n - \left(\prod_{k=m+1}^nA_k\right)X_m = \sum_{j=m+1}^n\left(\prod_{k=j+1}^n A_k\right)B_j.
\end{equation}
The notation $X^m_n$ is the same as in \eqref{d29.1} but the meaning is different. We have chosen the same notation for a different object because this will allow us to reuse a part of  the proof of Lemma \ref{lem:tailOfTheartificialSequence}

\lemma{\label{lem:representationOfTheArtificialSequence}\begin{enumerate}[(a)]
\item Let $Y_0=0$ and $Y_k:=X_{m+k}^m$ for $k\geq 1$. The sequence $(Y_k,k\geq 0)$ satisfies 
$$Y_{k+1}= A_{m+k+1}Y_k + B_{m+k+1}.$$
\item For every $m\geq 0$, the sequence
$(X_{m+k}^m,k\geq 0)$ has the same distribution as  $(X_{k},k\geq 0)$.
\end{enumerate}}
\begin{proof}
The claim in (a) follows from \eqref{eq:cutOffSequenceDefinition}. Part (b) follows from (a).
\end{proof}

\lemma{\label{lemma:artifficialSequence2}

Fix  $\varepsilon>0$ and suppose that $1<1+\delta< \sqrt{1+\eps}$ and $\alpha \in (1, 1+\delta)$. Events 
$$\left\{X_{\lfloor n^\alpha \rfloor}^{\lfloor (n-1)^\alpha \rfloor}\leq g\left(\frac{\Lambda(1+\varepsilon)}{\log n^{\alpha}}\right)\right\}$$
happen infinitely often a.s. }
\begin{proof}

We use Lemma \ref{lem:representationOfTheArtificialSequence} (b) 
and \eqref{eq:stochasticallyDecresingSequence} to see that
\begin{align*}
\sum_{n=1}^\infty \P\left(X_{\lfloor n^\alpha \rfloor}^{\lfloor (n-1)^\alpha \rfloor}\leq g\left(\frac{\Lambda(1+\varepsilon)}{\log n^{\alpha}}\right)\right)
=&\sum_{n=1}^\infty \P\left(X_{\lfloor n^\alpha \rfloor-\lfloor (n-1)^\alpha \rfloor}\leq g\left(\frac{\Lambda(1+\varepsilon)}{\log n^{\alpha}}\right)\right)\\
&\geq \sum_{n=1}^\infty \P\left(S\leq g\left(\frac{\Lambda(1+\varepsilon)}{\log n^{\alpha}}\right)\right).
\end{align*}
The random variable $S$ is \IGFT$^\rho_L(\Lambda)$ by Theorem \ref{thm:seriesConvegenceABIndependent}, so we can use
 Lemma \ref{propo:characterization} to write
$$
\P\left(S\leq g\left(\frac{\Lambda(1+\varepsilon)}{\log n^{\alpha}}\right)\right)
\geq\P\left(S\leq g\left(\frac{\Lambda(1+\delta)}{\log n^{\alpha/(1+\delta)}}\right)\right)\geq c_\delta n^{-\alpha/(1+\delta)} .
 $$
It follows that
 \begin{align*}
  &\sum_{n=2}^{\infty}\P\left(X_{\lfloor n^\alpha \rfloor}^{\lfloor (n-1)^\alpha \rfloor}\leq g\left(\frac{\Lambda(1+\varepsilon)}{\log n^{\alpha}}\right)\right) = \infty.
 \end{align*}
This, the fact that the random variables  $X_{\lfloor n^\alpha \rfloor}^{\lfloor (n-1)^\alpha \rfloor}$, $n\geq 2$, are jointly independent
and  the Borel-Cantelli lemma imply the claim made in the lemma.
\end{proof}

\rm

\lemma{\label{lem:tailOfTheartificialSequence2}
When $n\to\infty$, 
$$ \frac{X_{\lfloor n^\alpha \rfloor}-X_{\lfloor n^\alpha \rfloor}^{\lfloor (n-1)^\alpha \rfloor}}{g(\log \lfloor n^\alpha \rfloor)}\to 0,\quad \text {  a.s.  }$$
}

\begin{proof}

We first use \eqref{eq:cutOffSequenceDefinition} and then \eqref{d29.2} with $m=0$ to obtain
\begin{align*}
X_{\lfloor n^\alpha \rfloor}-X_{\lfloor n^\alpha \rfloor}^{\lfloor (n-1)^\alpha \rfloor}
= &\left(\prod_{k=\lfloor (n-1)^\alpha \rfloor+1}^{\lfloor n^\alpha \rfloor}A_k\right) X_{\lfloor (n-1)^\alpha \rfloor}\\
= &\left(\prod_{k=\lfloor (n-1)^\alpha \rfloor+1}^{\lfloor n^\alpha \rfloor}A_k\right) \left(\sum_{j=1}^{\lfloor (n-1)^\alpha \rfloor}\left(\prod_{k=j+1}^{\lfloor (n-1)^\alpha \rfloor} A_k\right)B_j\right)\\
=&\sum_{j=1}^{\lfloor (n-1)^\alpha \rfloor}\left(\prod_{k=j+1}^{\lfloor n^\alpha \rfloor} A_k\right)B_j.
\end{align*}
We apply assumption (ii) of Theorem \ref{thm:lowerEnvelope3},  a part of assumption (i) (namely, that $A_k$'s and $B_k$'s are nonnegative), and the change of index $i=\lfloor (n-1)^\alpha \rfloor-j$, to see that
\begin{align*}
X_{\lfloor n^\alpha \rfloor}-X_{\lfloor n^\alpha \rfloor}^{\lfloor (n-1)^\alpha \rfloor}
= &\sum_{j=1}^{\lfloor (n-1)^\alpha \rfloor}\left(\prod_{k=j+1}^{\lfloor n^\alpha \rfloor} A_k\right)B_j\leq \sum_{j=1}^{\lfloor (n-1)^\alpha \rfloor}\beta^{\lfloor n^\alpha \rfloor-j} B_j\\
=&\sum_{i=0}^{\lfloor (n-1)^\alpha \rfloor-1} \beta^{\lfloor n^\alpha \rfloor-\lfloor (n-1)^\alpha \rfloor+i} B_{\lfloor (n-1)^\alpha \rfloor-i}.
\end{align*}

The rest of the proof is the same as the proof of Lemma \ref{lem:tailOfTheartificialSequence} starting at $(\ref{eq:inequalityDiffReducedSequence})$, with $C=1$.
\end{proof}

\begin{proof}[Proof of Theorem \ref{thm:lowerEnvelope3}]
Note that $g$ is regularly varying with index $1/\rho$ at 0.
By Lemmas \ref{lemma:artifficialSequence2} and \ref{lem:tailOfTheartificialSequence2}, for every $\varepsilon>0$,
\begin{align*}
&\liminf_{n\to \infty}\frac{ X_n}{g(1/\log n)}
\leq \liminf_{n\to \infty} \frac{X_{\lfloor n^\alpha \rfloor}}{g(1/\log \left( \lfloor n^\alpha \rfloor\right))}\\
&\qquad=
\liminf_{n\to \infty}
\left( \frac{X_{\lfloor n^\alpha \rfloor}-X_{\lfloor n^\alpha \rfloor}^{\lfloor (n-1)^\alpha \rfloor}}{g(1/\log \left( \lfloor n^\alpha \rfloor\right))}
+  \frac{X_{\lfloor n^\alpha \rfloor}^{\lfloor (n-1)^\alpha \rfloor}}{g(1/\log  \left( \lfloor n^\alpha \rfloor\right))} \right)\\
&\qquad\leq \Lambda^{1/\rho}(1+\varepsilon)^{1/\rho}.
\end{align*}
Hence, $\liminf_{n\to \infty} X_n/g(1/\log (n) ) \leq \Lambda^{1/\rho}$ a.s. 
The theorem follows from this and Lemma \ref{lemma:easySideOfBorelCantelli2} (ii). 
\end{proof}

\rm

 The following theorem is a version of the well-known results by Kesten \cite{Kesten} and Goldie \cite{GoldieAAP}, 
formulated in \cite[Theorem 2.4.4]{X=AX+B}.

\thm{\label{thm:KestenGoldieResult}Assume that $(A,B)$ satisfy the following conditions.
\begin{enumerate}[(i)]
 \item $A\geq 0$, a.s., and the law of $\log A$ conditioned on $\{A>0\}$ is non-arithmetic, i.e., it is not supported on 
$a\Z$ for any $a> 0$.
\item There exists $\alpha>0$ such that $\E[A^{\alpha}]=1$, $\E[|B|^{\alpha}]<\infty$ and $\E[A^{\alpha}\log^+A]<\infty$.
\item $\P(Ax+B=x)<1$ for every $x\in \R$.
\end{enumerate}
Then the equation $X\stackrel{d}{=}AX+B$ has a solution. There exist constants $c_+,c_-$
such that $c_++c_->0$ and 
\begin{align}\label{n24.3}
\P(X>x)\sim c_+x^{-\alpha}\quad\textrm{and}\quad \P(X<-x)\sim c_-x^{-\alpha},\qquad \text {  when  }x\to\infty.
\end{align}
The constants $c_+,c_-$ are given by
$$c_+=\frac{1}{\alpha m_{\alpha}}\E\left[(AX+B)_+^{\alpha}-(AX)_+^{\alpha}\right],\quad c_-=\frac{1}{\alpha m_{\alpha}}\E\left[(AX+B)_-^{\alpha}-(AX)_-^{\alpha}\right],$$
where $m_{\alpha}=\E[A^{\alpha}\log A]$.
}\rm

\medskip
Next we will combine the results of Kesten and Goldie with our own.

\thm{\label{thm:KestenGoldieAndOurResults}
Assume that $(A,B)$ satisfy the following conditions.
\begin{enumerate}[(i)]
 \item $A$ and $B$ 
  are nonnegative, non-constant and positively quadrant dependent random variables. 
 \item There exists $\alpha>0$ such that $\E[ A^\alpha]=1$,  $\E[ A^\alpha\log A]<\infty$
and $\log A$ conditioned on $\{A>0\}$ is a non-arithmetic random variable.
 \item $B$ is an \IGFT$^\rho_L(\lambda)$-random variable
 and $\E[B^\alpha]<\infty$.
\end{enumerate}

Then the stochastic fixed point equation
$X\stackrel{d}{=}AX+B$
has a unique solution 
which is an $\IED^\rho_L(\Lambda)$ random variable, where
$\Lambda$ is defined in \eqref{n12.1}.

Moreover,
\begin{equation}\label{eq:tailInversePolynomial}
\lim_{x\to \infty}x^{\alpha} \P(X>x) 
\end{equation}
exists and is a positive number.}
\begin{proof}

We will show that assumptions of Theorem \ref{thm:seriesConvegenceABIndependent}
are satisfied. 
Since $x\mapsto \log x$ is concave and $A$ is non-constant, we have $\alpha\E[\log A]<\log \E[A^\alpha]=0$, so assumption (ii) of Theorem \ref{thm:seriesConvegenceABIndependent} holds. The other assumptions also hold so the first claim follows from Theorem \ref{thm:seriesConvegenceABIndependent}.

We note that assumptions (i) and (ii)  of Theorem \ref{thm:KestenGoldieResult} hold. We will verify assumption (iii).
The function
$\log A$ is non-arithmetic when conditioned on $\{A>0\}$,  hence $\P( A\ne 1)>0$. 
Since $\E[A^\alpha]=1$, there exists $\alpha \in (0,1)$ such that $\P(A\leq \alpha)>0$. Random variables $A$ and $B$ are positive quadrant dependent and $B$ is an $\IED^\rho_L(\lambda)$-random variable so
$$\P(Ax+B\neq x)\geq \P(Ax\leq \alpha x, B\leq(1-\alpha/2)x)\geq \P(A\leq \alpha)\P(B\leq(1-\alpha/2)x)>0.$$
All assumptions of 
Theorem \ref{thm:KestenGoldieResult} have been verified, so the claim 
$(\ref{eq:tailInversePolynomial})$ is a consequence of \eqref{n24.3}.
\end{proof}

\section{Dependent coefficients in the fixed point equation}\label{sec:example}

\rm

This section has a double purpose. First, we will explain how the questions studied in this paper arose in a different project. 
That project  is devoted to a rather different topic so we will only sketch some of its ideas.
Needless to say, we hope that our present results will be used to study other models.

Second, the fixed point equation \eqref{stochasticFixedPointEquation} coming from the other project has coefficients $A$ and $B$ dependent in a different way than in the previous sections of this paper. We plan to develop a theory for  such equations in a future article. Here we will limit ourselves  
to showing that the lack of positive quadrant dependence can make a substantial difference to the main results on \IGFT{} random variables.

\subsection{Motivation}

In the rest of this section, we will assume that the vector $(A,B)$
has the following density.
\begin{equation}
 \P(A\in da, B\in db)=\frac{e^{-\left(a^{1/2}-\frac{1}{2}\right)^2/b}-e^{-\left(a^{1/2}+\frac{1}{2}\right)^2/b}}{2\sqrt{\pi ab}}\cdot \frac{e^{-1 /(4b)}}{\sqrt{\pi}b^{3/2}},
\qquad a,b>0.\label{eq:exampleDensity}
\end{equation}

We will now explain how this density arose in a project on the Fleming-Viot type process (see \cite{BHM}). Our new results are in preparation but one can find the following basic scheme in \cite{extinctionOfFlemingViot}. Let: 
\begin{itemize}
 \item $W_1=(W_1(t):t\geq 0)$ and $W_2=(W_2(t):t\geq 0)$ be two independent Brownian motions;
 \item $\tau_1$ and $\tau_2$, respectively, be the first times $W_1$ and $W_2$ hit $0$;
 \item  $T=\min\{\tau_1,\tau_2\}$ and $Y=\max\{W_1(T),W_2(T)\}$.  
\end{itemize}
One can show that
$$\P(Y\in dy,T\in dt)=\frac{\exp\left(-\frac{(1-y)^2+1}{2t}\right)-
\exp\left(-\frac{(1+y)^2+1}{2t}\right)}{\pi t^2}dtdy, \qquad y,t>0.$$
The distribution given in $(\ref{eq:exampleDensity})$ is obtained by the substitution
$A=Y^{-2}$, $B=TY^{-2}$.\vspace{0.2cm}

Let $Y_n$ denote the position and let $T_n$ denote the time of the $n$-th renewal of the Fleming-Viot type process. 
Self-similarity of the process implies that $\left(T_n/Y_n^2\right)_{n\geq 1}$ is an iterated random sequence,
whose limiting behavior is described by the stochastic fixed point equation $X\stackrel{d}{=}AX+B$.  
We are interested in the right tail behavior of $Y_n/\sqrt{T_n}$ for large $n$, so we could show that 
$\liminf_{n\to \infty} Y_n/\sqrt{T_n \log \log T_n}$ is a constant. It turns out that this is equivalent to estimating $\P(X<x)$ 
as $x\to 0^+$.

\subsection{Dependent coefficients}
We start with some basic facts about the distribution defined in \eqref{eq:exampleDensity}.

Recall that $\sim$ means that the ratio of  two quantities converges to 1.

\propo{\label{prop:marginaNonindependentAB}
Assume that the vector $(A,B)$ has the distribution given by $(\ref{eq:exampleDensity})$.
\begin{enumerate}[(a)]
 \item The density of $A$ is
\begin{equation}
 \P(A\in da)=\frac{4}{\pi(4a^2+1)}\, da.\label{eq:densityOfA}
\end{equation}
\item The density of $B$ is
\begin{equation}
 \P(B\in db) =\frac1{\sqrt{\pi}}
\left(\int_{-\frac{1}{\sqrt{2b}}}^{\frac{1}{\sqrt{2b}}}\frac{e^{-v^2/2}}{\sqrt{2\pi }}dv\right)\, \frac{e^{-1/(4b)}}{b^{3/2}} db. \label{eq:densityOfB}
\end{equation}
Moreover,
\begin{equation}
 \P(B<x)\sim \int_0^x \frac{e^{-1/(4b)}}{\sqrt{\pi}b^{3/2}}db\quad \textrm{as}\ x\to 0^+,\label{eq:lowerTailOfB}
\end{equation}
\begin{equation}
 \P(B>x)\sim \frac{1}{\pi x}\quad \textrm{as}\ x\to \infty.\label{eq:upperTailOfB}
\end{equation}

Random variable $B$ is  \IGFT$^1_1(1/4)$.
\end{enumerate}
}
\begin{proof}
(a) We integrate the density of $(A,B)$ with respect to $b$ over $(0,\infty)$ to compute the density of $A$.
\begin{align*}
 \P(A\in da) &= \int_0^{\infty}
\frac{\exp\left(-\frac{\left(a^{1/2}-\frac{1}{2}\right)^2+\frac{1}{4}}{b}
\right)
-\exp\left(-\frac{\left(a^{1/2}+\frac{1}{2}\right)^2+\frac{1}{4}}{b}\right)}{2\pi \sqrt{a}b^2}  dbda\\
&=\frac{1}{2\pi \sqrt{a}}\left[\int_0^{\infty}
\frac{\exp\left(-\frac{\left(a^{1/2}-\frac{1}{2}\right)^2+\frac{1}{4}}{b}\right)}{b^2}  db-\int_0^{\infty}
\frac{\exp\left(-\frac{\left(a^{1/2}+\frac{1}{2}\right)^2+\frac{1}{4}}{b}
\right)}{b^2}  db\right]da\\
&=\frac{1}{2\pi \sqrt{a}}\left[\frac{1}{\left(a^{1/2}-\frac{1}{2}\right)^2+\frac{1}{4}}  -\frac{1}{\left(a^{1/2}+\frac{1}{2}\right)^2+\frac{1}{4}}\right]da\\
&=\frac{da}{\pi \left(a^2+\frac{1}{4}\right)}.
\end{align*}

(b) In the following calculation, we use the substitution $ u=\sqrt{a}$.
\begin{align*}
 & \P(B\in db)\\
&= \int_0^{\infty}\frac{\exp\left(-\frac{\left(a^{1/2}-\frac{1}{2}\right)^2}{b}\right)-
\exp\left(-\frac{\left(a^{1/2}+\frac{1}{2}\right)^2}{b}\right)}{2\sqrt{\pi ab}} da\, \frac{e^{-1/(4b)}}{\sqrt{\pi}b^{3/2}} db\\
&=\int_0^{\infty}\frac{\exp\left(-\frac{\left(u-\frac{1}{2}\right)^2}{b}\right)-\exp\left(-\frac{\left(u+\frac{1}{2}\right)^2}{b}\right)}{\sqrt{\pi b}} du\, \frac{e^{-1/(4b)}}{\sqrt{\pi}b^{3/2}} db\\
&=\left(\int_0^{\infty}\frac{\exp\left(-\frac{\left(u-\frac{1}{2}\right)^2}{b}\right)}{\sqrt{\pi b}}du-\int_0^{\infty}\frac{\exp\left(-\frac{\left(u+\frac{1}{2}\right)^2}{b}\right)}{\sqrt{\pi b}} du\right)\, \frac{e^{-1/(4b)}}{\sqrt{\pi}b^{3/2}} db\\
&=\left(\int_{-\frac{1}{2}}^{\infty}\frac{e^{-u^2/b}}{\sqrt{\pi b}}du-\int_{\frac{1}{2}}^{\infty}\frac{e^{-u^2/b}}{\sqrt{\pi b}} du\right)\, \frac{e^{-1/(4b)}}{\sqrt{\pi}b^{3/2}} db\\
&=\left(\int_{-\frac{1}{2}}^{\frac{1}{2}}\frac{e^{-u^2/b}}{\sqrt{\pi b}}du\right)\, \frac{e^{-1/(4b)}}{\sqrt{\pi}b^{3/2}} db\\
&=\left(\int_{-\frac{1}{\sqrt{2b}}}^{\frac{1}{\sqrt{2b}}}\frac{e^{-v^2/2}}{\sqrt{2\pi }}dv\right)\, \frac{e^{-1/(4b)}}{\sqrt{\pi}b^{3/2}} db.
\end{align*}
This proves \eqref{eq:densityOfB}.
We use \eqref{eq:densityOfB} and the following facts, 
$$\lim_{b\to\infty}
\sqrt{b/2}
\int_{-\frac{1}{\sqrt{2b}}}^{\frac{1}{\sqrt{2b}}}
\frac{e^{-v^2/2}}{\sqrt{2\pi }}dv=\frac{1}{\sqrt{2\pi }}\quad \textrm{and}\quad \lim_{b\to\infty}e^{-1/(4b)}= 1,$$
to conclude that, when $x\to \infty$,
$$\P(B>x)\sim \frac{1}{\pi}\int_x^{\infty}b^{-2}db=\frac{1}{\pi x}.$$
This proves \eqref{eq:upperTailOfB}.
Since
$$\lim_{b\to 0^+}\int_{-\frac{1}{\sqrt{2b}}}^{\frac{1}{\sqrt{2b}}}\frac{e^{-v^2/2}}{\sqrt{2\pi }}dv=1,$$
we obtain for $x\to 0^+$,
$$\P(B<x)\sim \int_0^x \frac{e^{-1/(4b)}}{\sqrt{\pi}b^{3/2}}db.$$
This proves \eqref{eq:lowerTailOfB}. The claim that $B$ is an $\IGFT^1_1(\frac{1}{4})$-random variable follows from \eqref{eq:lowerTailOfB} by the same arguments as in Lemma \ref{propo:inverseGamaDistribTail}.
\end{proof}

\lemma{\label{lemma:nonIndependentAandB}
If $(A_1,B_1)$ and $(A_2,B_2)$ are independent random vectors 
with the density \eqref{eq:exampleDensity} then
\begin{equation}
\limsup_{\varepsilon\to 0^+} \varepsilon\log \P(A_2B_1+B_2<\varepsilon) \leq -\frac{3}{10}<-\frac{1}{4}.\label{eq:nonIndependentTail} 
\end{equation}
}
\remark{
Suppose that
$(A_1,B_1)$ and $(A_2,B_2)$ are i.i.d., and 
 $A_1$ and $B_1$ are independent with distributions $(\ref{eq:densityOfA})$ and $(\ref{eq:densityOfB})$.
Then $\essinf (A_1)=0$ and $B_1$ is \IGFT$^1_1(\frac{1}{4})$.
By Corollary \ref{corol:RandomCoefficientIndependentTail}, $\lim_{\varepsilon\to 0^+} \varepsilon\log \P(A_2B_1+B_2<\varepsilon)=-\frac{1}{4}$.  
However, $(\ref{eq:nonIndependentTail})$ shows that we do not have the same conclusion when $A_1$ and $B_1$ are not independent. }
\begin{proof}[Proof of Lemma \ref{lemma:nonIndependentAandB}]
First, recall that by Lemma \ref{propo:characterization}, for every $\delta>0$ there exists $C>0$ such that for all $x>0$,
\begin{equation*}
 \P(B_1<x)\leq C\exp\left(-\left(\frac{1}{4}-\delta\right)x^{-1}\right). 
\end{equation*}
The second inequality in the following calculation is justified by the above formula. Later in the calculation, we will use the substitution $y=a^{1/2}$.
\begin{align*}
 & \P(A_2B_1+B_2<\varepsilon)\\
&= \int_0^{\varepsilon}\int_0^{\infty}\P(aB_1+b<\varepsilon)
\frac{\exp\left(-\frac{\left(a^{1/2}-\frac{1}{2}\right)^2+\frac{1}{4}}{b}\right)-\exp\left(-\frac{\left(a^{1/2}+\frac{1}{2}\right)^2+\frac{1}{4}}{b}\right)}{2\pi b^2\sqrt{ a}} \, da\, db\\
&\leq \int_0^{\varepsilon}\int_0^{\infty}\P\left(B_1<\frac{\varepsilon-b}{a}\right)\frac{\exp\left(-\frac{\left(a^{1/2}-\frac{1}{2}\right)^2+\frac{1}{4}}{b}\right)}{2\pi b^2\sqrt{ a}} \, da\, db\\
&\leq \int_0^{\varepsilon}\int_0^{\infty}
C\exp\left(-\left(\frac{1}{4}-\delta\right)\frac{a}{\varepsilon-b}\right)\frac{\exp\left(-\frac{\left(a^{1/2}-\frac{1}{2}\right)^2+\frac{1}{4}}{b}\right)}{2\pi b^2\sqrt{ a}} \, da\, db\\
&\leq \int_0^{\varepsilon}\int_0^{\infty}
C\exp\left(-\left(\frac{1}{4}-\delta\right)\frac{a}{\varepsilon}\right)\frac{\exp\left(-\frac{\left(a^{1/2}-\frac{1}{2}\right)^2}{\varepsilon}-\frac{1}{4b}\right)}{2\pi b^2\sqrt{ a}} \, da\, db\\
&= C\int_0^{\infty}\frac{1}{2\sqrt{\pi  a}}\exp\left(-\varepsilon^{-1}\left[\left(a^{1/2}-\frac{1}{2}\right)^2+\left(\frac{1}{4}-\delta\right)a\right]\right) \, da\int_0^{\varepsilon}\frac{e^{-1/(4b)}}{\sqrt{\pi} b^2}\, db\\
&= C\int_0^{\infty}\frac{1}{2\sqrt{\pi  a}}\exp\left(-\varepsilon^{-1}\left[\left(\frac{5}{4}-\delta\right)\left(a^{1/2}-\frac{2}{5-4\delta}\right)^2+\frac{1}{4}-\frac{1}{5-4\delta}\right]\right) \, da\ \times\\
&\qquad \times\int_0^{\varepsilon}\frac{e^{-1/(4b)}}{\sqrt{\pi} b^2}\, db\\
&\leq C\sqrt{2}\left[\frac{\varepsilon}{2}\left(\frac{5}{4}-\delta\right)^{-1}\right]^{1/2}\int_{-\infty}^{\infty}\frac{1}{\sqrt{2\pi \left[\frac{\varepsilon}{2}\left(\frac{5}{4}-\delta\right)^{-1}\right]}}
\exp\left(-\frac{\left(y-\frac{2}{5-4\delta}\right)^2}{2\left[\frac{\varepsilon}{2}\left(\frac{5}{4}-\delta\right)^{-1}\right]}\right) \, dy\ \times\\ 
& \qquad \times\exp\left(-\varepsilon^{-1}\left(\frac{1}{4}-\frac{1}{5-4\delta}\right)\right)\int_0^{\varepsilon}\frac{e^{-1/(4b)}}{\sqrt{\pi} b^2}\, db\\
&= C\sqrt{2}\left[\frac{\varepsilon}{2}\left(\frac{5}{4}-\delta\right)^{-1}\right]^{1/2}\exp\left(-\varepsilon^{-1}\left(\frac{1}{4}-\frac{1}{5-4\delta}\right)\right)\int_0^{\varepsilon}
\frac{e^{-1/(4b)}}{\sqrt{\pi} b^2}\, db.
\end{align*}
By the same argument as in the proof of Lemma \ref{propo:inverseGamaDistribTail},
\begin{align*}
\limsup_{\varepsilon\to 0^+} \varepsilon\log 
\int_0^{\varepsilon}
\frac{e^{-1/(4b)}}{\sqrt{\pi} b^2}\, db = -1/4.
\end{align*}
This and the previous estimate yield
$$\limsup_{\varepsilon\to 0^+} \varepsilon\log \P(A_2B_1+B_2<\varepsilon) \leq -\frac{1}{4}-\frac{1}{4}+\frac{1}{5-4\delta}.$$
The proof of the lemma is completed by letting $\delta \to 0^+$.
\end{proof}

\propo{
Assume that the vector $(A,B)$ has the distribution given by $(\ref{eq:exampleDensity})$.
The stochastic fixed point equation 
\begin{equation}
 X\stackrel{d}{=}AX+B, \label{eq:fixPointEquationNonIndependenExample}
\end{equation}
has a unique solution and we have 
\begin{equation}
\limsup_{\varepsilon\to 0^+} \varepsilon\log \P(X<\varepsilon) \leq -\frac{3}{10}<-\frac{1}{4}. \label{eq:fixepointSolutionUpeerEstimateLowerTail}
\end{equation}
}
\begin{proof}
We use the substitution $a=x/2$ in the following calculation.
\begin{align*}
 \E[\log A]&=\int_0^{\infty} \frac{4\log a}{\pi (4a^2+1)}da 
 =2\int_0^{\infty} \frac{\log (x/2)}{\pi (x^2+1)}dx =2\int_0^{\infty} \frac{\log x-\log 2}{\pi (x^2+1)}dx\\
&=-2\log 2\int_0^{\infty} \frac{1}{\pi (x^2+1)}dx+\int_0^{\infty} \frac{\log x}{\pi (x^2+1)}dx.
\end{align*}
It is easy to see that $\int_0^{\infty} \frac{1}{\pi (x^2+1)}dx =1/2$. The substitution $y=x^{-1}$
yields $\int_0^{\infty} \frac{\log x}{\pi (x^2+1)}dx = -\int_0^{\infty} \frac{\log y}{\pi (y^2+1)}dy$, so these integrals must be equal to $0$. Hence, $\E[\log A]=-\log 2<0$. 

It follows from $(\ref{eq:upperTailOfB})$ that $\E[\log^+B]<\infty$.
The assumptions of Theorem \ref{thm:fixedPointRepresentation2} are satisfied so \eqref{eq:fixPointEquationNonIndependenExample} has  a unique solution.

Suppose that $X$ is the  solution to $(\ref{eq:fixPointEquationNonIndependenExample})$.
It is non-negative because it has the representation \eqref{eq:solution:series2}, where all random variables are non-negative.
Suppose that $(A_1,B_1)\stackrel{d}{=}(A,B)$, $(A_2,B_2)\stackrel{d}{=}(A,B)$, and $(A_1,B_1)$, $(A_2,B_2)$ and $X$ are jointly independent. Then
$$\P(X<\varepsilon)=\P(A_1X+B_1<\varepsilon)
=\P(A_2(A_1X+B_1)+B_2<\varepsilon)
\leq \P(A_2B_1+B_2<\varepsilon).$$
Now $(\ref{eq:fixepointSolutionUpeerEstimateLowerTail})$ follows from Lemma \ref{lemma:nonIndependentAandB}. 
\end{proof}

\remark{One can actually show that 
\begin{equation*}
 \lim_{\varepsilon\to 0^+} \varepsilon\log \P(X<\varepsilon)=-\frac{1}{2}.
\end{equation*}
However, the proof would take several additional pages so we will only sketch it. 
An appropriate modification of the argument in the proof of Lemma \ref{lemma:nonIndependentAandB}  shows that for the sequence $X_n=A_nX_{n-1}+B_n$ with $X_0=0$, we have 
$\limsup_{\varepsilon\to 0^+} \varepsilon\log \P(X_n<\varepsilon) \leq \tau_n$, where $\tau_n\downarrow -\frac{1}{2}$. 
Using the fact that $\P(X<\varepsilon)\leq \P(X_n<\varepsilon)$ it follows that
$$\limsup_{\varepsilon\to 0^+} \varepsilon\log \P(X<\varepsilon) \leq-\frac{1}{2}.$$
On the other hand for every $\delta>0$, one can find a bounded positive function $g_{\delta}$ on $(0,\infty)$
such that 
$$\P(Ax+B<\varepsilon)\geq g_{\delta}(x)e^{-\left(\frac{1}{2}+\delta\right)\varepsilon^{-1}},$$
for all $x> 0$. Hence, $\P(AX+B<\varepsilon)\geq \E[g_{\delta}(X)]e^{-\left(\frac{1}{2}+\delta\right)\varepsilon^{-1}}$, and, therefore,
$$\liminf_{\varepsilon\to 0^+} \varepsilon\log \P(X<\varepsilon) \geq-\left(\frac{1}{2}+\delta\right).$$
The claim  follows by letting $\delta\to 0^+$. }

\section*{Acknowledgements}\rm
The authors would like to thank Bojan Basrak, Hrvoje Planini\'c and anonymous referees for the most useful advice.
The third author would like to thank the Department of Mathematics at the University of Washington in Seattle, where the project took place, for the hospitality.
The third author is also grateful to the Microsoft Corporation for allowance on Azure cloud service where he ran many simulations.

Research of the first author was supported in part by Simons Foundation Grant 506732. 
 Research of the third author was supported in part by Croatian Science Foundation grant 3526.

\bibliographystyle{plain}
\bibliography{fixed}

\begin{thebibliography}{10}

\bibitem{extinctionOfFlemingViot}
Mariusz Bieniek, Krzysztof Burdzy, and Soumik Pal.
\newblock Extinction of {F}leming-{V}iot-type particle systems with strong
  drift.
\newblock {\em Electron. J. Probab.}, 17:no. 11, 15, 2012.

\bibitem{regularVariation}
N.~H. Bingham, C.~M. Goldie, and J.~L. Teugels.
\newblock {\em Regular variation}, volume~27 of {\em Encyclopedia of
  Mathematics and its Applications}.
\newblock Cambridge University Press, Cambridge, 1989.

\bibitem{BrockwellDavis}
Peter~J. Brockwell and Richard~A. Davis.
\newblock {\em Time series: theory and methods}.
\newblock Springer Series in Statistics. Springer-Verlag, New York, second
  edition, 1991.

\bibitem{X=AX+B}
Dariusz Buraczewski, Ewa Damek, and Thomas Mikosch.
\newblock {\em Stochastic models with power-law tails. The equation $X=AX+B$}.
\newblock Springer Series in Operations Research and Financial Engineering.
  Springer, 2016.

\bibitem{BHM}
Krzysztof Burdzy, Robert Ho\l{}yst, and Peter March.
\newblock A {F}leming-{V}iot particle representation of the {D}irichlet
  {L}aplacian.
\newblock {\em Comm. Math. Phys.}, 214(3):679--703, 2000.

\bibitem{diaconisFreedman}
Persi Diaconis and David Freedman.
\newblock Iterated random functions.
\newblock {\em SIAM Rev.}, 41(1):45--76, 1999.

\bibitem{GoldieAAP}
Charles~M. Goldie.
\newblock Implicit renewal theory and tails of solutions of random equations.
\newblock {\em Ann. Appl. Probab.}, 1(1):126--166, 1991.

\bibitem{HoffBayes}
Peter~D. Hoff.
\newblock {\em A first course in {B}ayesian statistical methods}.
\newblock Springer Texts in Statistics. Springer, New York, 2009.

\bibitem{Kesten}
Harry Kesten.
\newblock Random difference equations and renewal theory for products of random
  matrices.
\newblock {\em Acta Math.}, 131:207--248, 1973.

\bibitem{Kolodziejek}
Bartosz Ko{\l}odziejek.
\newblock On perpetuities with light tails.
\newblock {\em Adv. in Appl. Probab.}, 50(4):1119--1154, 2018.

\bibitem{tchen1980}
Andr\'e~H. Tchen.
\newblock Inequalities for distributions with given marginals.
\newblock {\em Ann. Probab.}, 8(4):814--827, 1980.

\end{thebibliography}

\end{document}